\newcommand{\eTRS}{\rm (eTRS)\,}
\newcommand{\meTRS}{\rm (m-eTRS)\,}
\newcommand{\TTRS}{{\rm TTRS} }
\newcommand{\TTRSp}{{\rm TTRS}}
\newcommand{\TRS}{{\rm TRS }}
\newcommand{\TRSp}{{\rm TRS}}
\newcommand{\LNGM}{\rm LNGM\,}
\DeclareMathOperator{\trace}{trace}
\begin{document}

\title{A hybrid algorithm for the two-trust-region subproblem}

\author{Saeid Ansary Karbasy, Maziar Salahi}

\authorrunning{Two-trust-region subproblem} 

\institute{Saeid Ansary Karbasy \and Maziar Salahi\at
             Faculty of Mathematical Sciences, University of Guilan, Rasht, Iran \\
            Tel.: +98-131-3233901\\
           Fax: +98-131-3233509\\
          \email{saeidansary144@yahoo.com, salahim@guilan.ac.ir}           
          }

\date{}

\maketitle

\begin{abstract}
Two-trust-region subproblem (TTRS), which is the minimization of a general quadratic function over the intersection of two full-dimensional ellipsoids,  has been the subject of several recent research.  In this paper,  to solve TTRS, a hybrid of efficient algorithms for finding global and local-nonglobal minimizers of trust-region subproblem and the  alternating direction method of multipliers (ADMM) is proposed. The   convergence  of the ADMM steps to the first order stationary condition is proved under certain conditions. On several classes of test problems, we  compare the new algorithm with  the recent algorithm of Sakaue et. al's  \cite{SakaueNakat:16} and Snopt software.
\end{abstract}\\

{\bf Keywords: }  Two-trust-region subproblem, Trust-region subproblem, Local non-global minimum, Alternating direction method of multipliers.

\section{Introduction}
This paper studies the two-trust-region subproblem
{\rm(TTRS)}, which is the minimization of a general quadratic function over the intersection
of two full-dimensional ellipsoids:
\begin{align}\label{mrs}
\min \quad &\frac12x^TAx +a^Tx \notag \\
&||x||^2 \leq \delta_1^2 , \tag{\rm {TTRS}}\\
&(x-c)^TB(x-c) \leq \delta_2^2 , \notag
\end{align}
where $A\in \mathbb{R}^{n\times n}$ is a symmetric matrix,  $B \in \mathbb{R}^{n\times n}$ is  symmetric positive definite,
   $c \in \mathbb{R}^n$ and   $\delta_1,\delta_2 \in\mathbb{R}$. If $A$ is positive semidefinite, then \TTRS is solvable in polynomial time
 by second-order cone programming. Therefore, throughout this paper we assume $A$ is  indefinite.

When $B=0$,     \TTRS reduces to the well-known trust-region subproblem (TRS)
that has been widely studied and efficient algorithms exist to solve it \cite{Adachi:15,con}.
 However, the additional constraint makes   \TTRS more challenging than  \TRSp.
 \TTRS is originally introduced by Celis, Dennis, and Tapia \cite{CelisDT:84} and   called the CDT subproblem.
Two algorithms have been suggested   for the CDT subproblem under
the assumption that the objective function is convex \cite{Yuan:91, Heink:94}.
Zhang proposed  an algorithm for the CDT subproblem under the assumption that
the optimal  Lagrangian Hessian  is positive semidefinite \cite{Zhang:92}.
However, Yuan proved    that the Hessian of  Lagrangian for  CDT subproblem may have one negative eigenvalue at   global solution \cite{Yuan:90}.  In 1994,  Mart\'inez proved the existence of at most one  local non-global minimum (LNGM) for \TRSp, which in the case of feasibility for TTRS is a candidate for its optimal solution \cite{Mart:94}.   Peng and  Yuan showed  that the  CDT  subproblem can have a duality gap and studied its necessary and sufficient optimality conditions \cite{Peng:97}.  Later in 2000, Nesterov  and Wolkowicz    proved that the following semidefinite programming (SDP)  relaxation is tight for   TTRS if and only if the Hessian of   Lagrangian is positive semidefinite at   global solution \cite{NesWolk:00}:
\begin{align}\label{relax}
\min_{x,X}&~ \frac12 \textbf{•}\trace(AX)+a^Tx \nonumber \\
&~  \trace(X)\leq \delta_1^2, \nonumber \\
&~\trace(BX)-2c^TBx+c^TBc \leq \delta_2^2   \tag{\rm {SDP}},\\ %
 &  X\succeq xx^T.  \nonumber
\end{align}
 In 2001, Chen and Yuan   presented a sufficient  condition
 under which the Lagrangian function  of the  CDT  subproblem has  positive semidefinite Hessian at optimal solution.
Moreover, Ye and Zhang \cite{YYe:15} showed that  for  general   CDT   subproblem  with certain additional conditions, the SDP relaxation is tight in many cases.  In 2005, Li and Yuan proposed an algorithm that finds a global solution of the  CDT  subproblem with no duality gap, i.e.,
the Hessian of   Lagrangian is positive semidefinite at   global solution.
Beck and   Eldar  \cite{BeckEldar:06} used the complex valued  SDP    approach to come up
with a similar sufficient condition to guarantee the positive semidefiniteness of the Hessian
of the Lagrangian function at optimal solution. They reported that in their experiments
on randomly generated instances, their sufficient condition was satisfied for the
majority of  instances.
In 2009, Ai and Zhang \cite{WAiZhang:15} derived   verifiable conditions to characterize when the   CDT   subproblem has no duality gap, which is equivalent to when the  SDP  relaxation of the  CDT  problem is tight.
In 2013, Burer and Anstreicher \cite{Buran:13} provided a tighter relaxation  by adding second order cone constraints to the classical  SDP  relaxation, but the resulting problem still has a relaxation gap.
Later  in 2016, Yang and Burer \cite{YanBur:16} reformulated  special case of the   TTRS
with two variables into an exact  SDP  formulation by adding valid constraints.
In  general,  the complexity of the  CDT   subproblem had been open for a long time,
until Bienstock  \cite{Bienstock:16} recently proved its polynomial-time solvability.
 Unfortunately, Bienstock's polynomial-time algorithm does not appear to be very practical, because the polynomial-time feasibility algorithm looks difficult to implement.   In the most recent research, Sakaue et. al \cite{SakaueNakat:16} proposed a polynomial-time algorithm assuming exact eigenvalue computation. However, due to the high computational cost of their algorithm, they reported numerical results for only  dimension $n\leq 40$.

In several recent research,  {\it Alternating Direction Method of Multipliers} (ADMM) has been successfully used
to solve both  convex and nonconvex optimization problems with convergence analysis to stationary solutions \cite{Bai:15,  Boydadmm:10, haji, Hong:15, Luo:07, Y.Shen:14, L.Xu:11, SalahiTaati:17}. Moreover, global and local non-global minimizers of TRS are potential candidates for the optimal solution of \TTRS in the case of feasibility. Thus in this paper, we propose a hybrid of efficient algorithms for finding the global and local non-global minimizers of TRS and ADMM to solve \TTRSp \cite{Adachi:15,SalahiTaatiw:20}. The rest of the paper is organized as follows. In Section 2, we review some results related to \LNGM of \TRS and      optimality conditions  for \TTRSp. In  Section 3, we describe the hybrid algorithm and prove the convergence of ADMM steps to the first-order stationary point. Finally, we report   numerical results for several classes of test problems in Section 4 to demonstrate the efficiency of  hybrid algorithm
compared with the algorithm of Sakaue et. al \cite{SakaueNakat:16} for small dimensions and Snopt for medium and large-scale problems.\\
\\

{\hspace{-.5cm}\bf{Notations:}} The $i$th eigenvalue of $A$ is denoted by $\lambda_i$, where
 \begin{align*}
 \lambda_{\min}(A)=\lambda_1 \leq \lambda_2 \leq  \cdots \leq \lambda_n.
 \end{align*}
 Also  $A=Q\Lambda Q^T$ is the spectral decomposition of $A$ where
$\Lambda =\mathrm{Diag} (\lambda_1, \lambda_2, \cdots, \lambda_n)$ and  $q_i$ denotes the $i$th column of $Q$. The orthogonal complement of $W$ is
  \begin{align*}
W^{\bot } = \lbrace x~|~x^Ty=0,~~\forall y \in W \rbrace
 \end{align*}
and $\mathcal{N}(A)$ denotes the nullspace of $A$.

\section{{\bf \LNGM of \TRS and optimality conditions}}
\subsection{{\bf \LNGM of \TRS}}
In this subsection,   we review some results related to \TRSp.
Consider the following \TRS
by removing the second constraint of \TTRSp:
 \begin{align}\label{trsAD}
 &\min ~~ \frac{1}{2} x^TAx+ a^Tx \nonumber \\
 &~\mathrm{s.t.} ~~~~~ ||x||^2 \leq \delta_1^2.
 \end{align}
 If the global solution  of  (\ref{trsAD}) is  feasible for \TTRSp, then it is also a global solution for it. Otherwise,  (\ref{trsAD})  might have a \LNGM \ that  is feasible for \TTRS and also a candidate for its global solution. In what follows, we review some results related to LNGM of (\ref{trsAD}).
\begin{theorem}[Necessary Conditions for LNGM, \cite{Mart:94}]\label{nclng1}
 Let $x^*$ be a LNGM of (\ref{trsAD}). Choose $V \in \mathbb{R}^{n \times (n-1)}$ such that $\left[\frac{1}{||x^*||}x^* \Big{\vert} V \right]$
 is orthogonal. Then there exists a unique $ \lambda^* \in \left( \max \lbrace 0 ,-\lambda_2 \rbrace , -\lambda_1  \right)$   such that
 \begin{align}\label{theolng3}
& V^T\left( A+\lambda^* I \right) V\succeq 0, \nonumber \\
 &\left( A+\lambda^* I \right) x^* = -a, \\
& ||x^*||^2=\delta^2_1. \nonumber
 \end{align}
\end{theorem}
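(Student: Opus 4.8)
The plan is to obtain the three relations in~(\ref{theolng3}) from the first- and second-order necessary optimality conditions for the single-constraint program~(\ref{trsAD}), and then to locate $\lambda^*$ in the stated open interval by combining an eigenvalue-interlacing estimate for $V^T(A+\lambda^* I)V$ with the hypothesis that $x^*$ is a local but \emph{not} global minimizer.

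First I would dispose of the degeneracy $x^*=0$: stationarity at the origin would force $a=0$, and since $A$ is indefinite ($\lambda_1<0$) the objective strictly decreases along an eigenvector of $\lambda_1$ while the iterate stays inside the ball, so $0$ is not a local minimizer. Hence $x^*\neq0$, the single constraint gradient $2x^*$ is nonzero so LICQ holds, and the KKT conditions yield a multiplier $\lambda^*\geq0$ with $(A+\lambda^* I)x^*=-a$ and $\lambda^*(\|x^*\|^2-\delta_1^2)=0$. I would then rule out $\lambda^*=0$ using the second-order necessary condition: if $\lambda^*=0$ then $d^TAd\geq0$ for every $d$ in the critical cone $\{d:(x^*)^Td\leq0\}$, hence, replacing $d$ by $-d$, for every $d\in\mathbb{R}^n$, so $A\succeq0$, contradicting indefiniteness. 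Therefore $\lambda^*>0$, complementarity forces $\|x^*\|=\delta_1$ (the last line of~(\ref{theolng3})), and the middle line is the stationarity equation just obtained. Uniqueness of $\lambda^*$ is immediate: $(A+\lambda I)x^*=(A+\mu I)x^*=-a$ gives $(\lambda-\mu)x^*=0$, and $x^*\neq0$ forces $\lambda=\mu$.

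Because $\lambda^*>0$ and the ball constraint is active, the critical cone at $x^*$ is the tangent hyperplane $\{d:(x^*)^Td=0\}=\Range(V)$, and the second-order necessary condition now reads $d^T(A+\lambda^* I)d\geq0$ for all $d\in\Range(V)$, i.e.\ $V^T(A+\lambda^* I)V\succeq0$, the first line of~(\ref{theolng3}). For the upper bound I would invoke non-globality: a feasible point satisfying $(A+\lambda I)x=-a$, $\lambda\geq0$, $\lambda(\delta_1^2-\|x\|^2)=0$ and $A+\lambda I\succeq0$ meets the classical sufficient condition for \emph{global} optimality of~(\ref{trsAD}); since $x^*$ is local-nonglobal, $A+\lambda^* I$ cannot be positive semidefinite, whence $\lambda_1+\lambda^*<0$, that is $\lambda^*<-\lambda_1$.

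The remaining, and I expect most delicate, point is the strict lower bound $\lambda^*>\max\{0,-\lambda_2\}$. Cauchy's interlacing theorem applied to the compression $V^T(A+\lambda^* I)V$ of $A+\lambda^* I$ onto the hyperplane $\Range(V)$ gives, from positive semidefiniteness of that compression, that the second-smallest eigenvalue $\lambda_2+\lambda^*$ of $A+\lambda^* I$ is nonnegative, so $\lambda^*\geq-\lambda_2$; together with $\lambda^*>0$ this already gives $\lambda^*>\max\{0,-\lambda_2\}$ except possibly when $\lambda_2<0$ and $\lambda^*=-\lambda_2$. To exclude that case I would argue by contradiction and perturbation: expanding $x^*=\sum_i(q_i^Tx^*)q_i$, the identity $f(x(t))-f(x^*)=\tfrac12 u^T(A+\lambda^* I)u$ with $u=x(t)-x^*$, valid for any curve $x(t)$ on the sphere $\|x\|=\delta_1$ through $x^*$ (it follows from $Ax^*+a=-\lambda^* x^*$ and $(x^*)^Tx(t)=\delta_1^2-\tfrac12\|u\|^2$), lets one test tangent directions $d\in\{q_1,q_2\}$; since $\lambda^*=-\lambda_2$ makes $q_2$ a null direction of $A+\lambda^* I$ and stationarity forces $q_2^Ta=0$, a suitable normalized perturbation in the plane spanned by $q_1,q_2$ produces a feasible point with strictly smaller objective, contradicting local minimality. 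The genuinely fiddly subcases are the ``hard-case'' ones in which $q_1^Ta$ or $q_2^Ta$ vanishes, so the corresponding component of $x^*$ is not pinned down by stationarity; these are exactly why the interval in~(\ref{theolng3}) is open, and I would handle them by pushing the perturbed point outward and estimating, to higher order, the objective decrease caused by renormalizing back onto the ball. This last bookkeeping is the part I expect to require the most care.
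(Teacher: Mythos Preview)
The paper does not supply a proof of this theorem; it is quoted from Mart\'inez~\cite{Mart:94} and used as a black box, so there is no in-paper argument to compare against. Your derivation of the three relations in~(\ref{theolng3}), of the uniqueness of $\lambda^*$, and of the bounds $\lambda^*>0$ and $\lambda^*<-\lambda_1$ is correct, and the Cauchy interlacing step giving $\lambda^*\ge -\lambda_2$ is the right idea.

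Your treatment of the boundary case $\lambda^*=-\lambda_2$, however, is overcomplicated, and the ``fiddly subcases'' you anticipate do not arise. First, $q_1^Ta=0$ cannot occur: stationarity gives $(\lambda_1+\lambda^*)(q_1^Tx^*)=-q_1^Ta$, and since $\lambda_1+\lambda^*<0$, vanishing of $q_1^Ta$ would force $q_1^Tx^*=0$, placing $q_1$ in $\Range(V)$ and giving $q_1^T(A+\lambda^*I)q_1=\lambda_1+\lambda^*<0$, contradicting the positive semidefiniteness you already proved. Second, $q_2^Ta=0$ is not a subcase but an automatic consequence of $\lambda^*=-\lambda_2$. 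Third, no higher-order bookkeeping is needed, because your identity $f(x)-f(x^*)=\tfrac12 u^T(A+\lambda^*I)u$ for $x$ on the sphere is \emph{exact}: it suffices to exhibit $x$ on the sphere arbitrarily close to $x^*$ with $x-x^*\in\mathrm{span}\{q_1,q_2\}$ and $q_1^T(x-x^*)\neq0$. Writing $x^*=\alpha_1 q_1+\alpha_2 q_2+r$ with $r\perp q_1,q_2$ and $\alpha_1\neq0$, rotate the pair $(\alpha_1,\alpha_2)$ by an angle $\theta$ in the $(q_1,q_2)$-plane while keeping $r$ fixed; the resulting $x(\theta)$ stays on the sphere, $u(\theta)=x(\theta)-x^*\in\mathrm{span}\{q_1,q_2\}$, and $q_1^Tu(\theta)=\alpha_1(\cos\theta-1)-\alpha_2\sin\theta\neq0$ for small $\theta\neq0$. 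Hence $f(x(\theta))-f(x^*)=\tfrac12(\lambda_1+\lambda^*)\bigl(q_1^Tu(\theta)\bigr)^2<0$, contradicting local minimality. This closes the interval on the left cleanly, with no renormalization and no case split.
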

\begin{corollary}[Lemma 3.2, \cite{Mart:94}]\label{colng1}
 If  $a$ is orthogonal to
some eigenvectors corresponding to $\lambda_1$, then no LNGM exists.
\end{corollary}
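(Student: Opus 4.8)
\noindent\emph{Proof proposal.}
The plan is to argue by contradiction, feeding the hypothesis straight into the necessary conditions of Theorem~\ref{nclng1}. Assume a LNGM $x^*$ of~(\ref{trsAD}) exists. Then Theorem~\ref{nclng1} furnishes a multiplier $\lambda^* \in \left(\max\{0,-\lambda_2\},-\lambda_1\right)$ together with an orthogonal completion $\left[\tfrac{1}{\|x^*\|}x^* \,\middle|\, V\right]$ such that $V^T(A+\lambda^* I)V \succeq 0$, $(A+\lambda^* I)x^* = -a$ and $\|x^*\|^2 = \delta_1^2$. In particular the interval $\left(\max\{0,-\lambda_2\},-\lambda_1\right)$ is nonempty, which forces $\lambda_1 < \lambda_2$; hence $\lambda_1$ is a simple eigenvalue of $A$ and ``an eigenvector corresponding to $\lambda_1$'' is just $q_1$ up to scaling, so the hypothesis of the corollary reads $a^T q_1 = 0$. (The degenerate case $\lambda_1=\lambda_2$ needs no separate treatment: since $A$ is indefinite, $\lambda_1<0$, so $\max\{0,-\lambda_2\}=-\lambda_1$, the admissible interval for $\lambda^*$ is empty, and Theorem~\ref{nclng1} already excludes a LNGM.)

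The first step is to read off the $q_1$-component of the stationarity equation. Taking the inner product of $(A+\lambda^* I)x^* = -a$ with $q_1$ gives $(\lambda_1+\lambda^*)(q_1^T x^*) = -a^T q_1 = 0$. Because $\lambda^* < -\lambda_1$ we have $\lambda_1+\lambda^* \neq 0$, hence $q_1^T x^* = 0$; that is, $q_1$ is orthogonal to $x^*$ and therefore lies in $\Range(V)$, say $q_1 = Vw$ for some $w$. The second step is to contradict the curvature condition: from $V^T(A+\lambda^* I)V \succeq 0$ and $q_1 = Vw$ we obtain $0 \le w^T V^T(A+\lambda^* I)V w = q_1^T(A+\lambda^* I)q_1 = \lambda_1+\lambda^*$, whereas $\lambda^* < -\lambda_1$ gives $\lambda_1+\lambda^* < 0$. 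This contradiction shows that no LNGM can exist.

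I do not expect a genuine obstacle here. The only points requiring a little care are the reduction to a simple $\lambda_1$ (so that the hypothesis becomes $a^T q_1 = 0$) and the identification $\Range(V) = \{\,d : d^T x^* = 0\,\}$, which is what legitimizes testing the negative-curvature direction $q_1$ against the inequality $V^T(A+\lambda^* I)V \succeq 0$. Everything else is a two-line computation with the three relations of Theorem~\ref{nclng1}.
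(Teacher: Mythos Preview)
Your argument is correct. The paper itself does not supply a proof of this corollary; it is simply quoted from Mart\'inez~\cite{Mart:94} as Lemma~3.2, so there is no ``paper's proof'' to compare against line by line. That said, your derivation is exactly the standard one underlying Mart\'inez's result: feed the orthogonality $q_1^T a = 0$ into the stationarity equation $(A+\lambda^* I)x^* = -a$ to force $q_1 \perp x^*$, then test the second-order condition $V^T(A+\lambda^* I)V \succeq 0$ against the direction $q_1 \in \Range(V)$ to obtain the contradiction $\lambda_1 + \lambda^* \ge 0$. Your handling of the edge cases is also clean: nonemptiness of $(\max\{0,-\lambda_2\},-\lambda_1)$ indeed forces $\lambda_1 < \lambda_2$, so the simplicity of $\lambda_1$ is automatic whenever a LNGM could exist, and the identification $\Range(V) = (x^*)^\perp$ is exactly what the orthogonality of $\bigl[\,\tfrac{1}{\|x^*\|}x^* \mid V\,\bigr]$ provides. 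Nothing is missing.
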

Now let
$$ \phi (\lambda) := || \left( A+\lambda I \right)^{-1}  a||^2. $$
For
$$\lambda \in \left( \max \lbrace 0 ,-\lambda_2 \rbrace , -\lambda_1  \right),$$
Theorem \ref{nclng1} shows that  equation $\phi (\lambda)=\delta_1^2$
is a necessary condition for an LNGM.
Furthermore, using the eigenvalue decomposition of $A$, we have
\begin{align}\label{flng6}
&\phi (\lambda)= \sum_{i=1}^n \frac{(q^T_ia)^2}{(\lambda_i + \lambda)^2}, \nonumber \\
&\phi' (\lambda)= -2\sum_{i=1}^n \frac{(q^T_ia)^2}{(\lambda_i + \lambda)^3}, \nonumber \\
&\phi'' (\lambda)= 6\sum_{i=1}^n \frac{(q^T_ia)^2}{(\lambda_i + \lambda)^4}.
\end{align}
Equation (\ref{flng6}) implies that the function $\phi (\lambda)$ is strictly convex on $\lambda \in \left( \max \lbrace 0 ,-\lambda_2 \rbrace , -\lambda_1  \right)$
and so it has at most two roots in the interval $\left( \max \lbrace 0 ,-\lambda_2 \rbrace , -\lambda_1  \right)$, which leads to the following theorem.
\begin{theorem}[Theorem 3.1, \cite{Mart:94}] \label{lng1}
$ $ \\
 1. If $x^*$ is a LNGM of (\ref{trsAD}), then (\ref{theolng3}) holds with a unique $\lambda \in \left( \max \lbrace 0 ,-\lambda_2 \rbrace , -\lambda_1  \right)$ and  $\phi' (\lambda^*)\geq 0$. \\
 2. There exists at most one LNGM.
\end{theorem}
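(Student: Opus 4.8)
The plan is to handle the two parts separately: part~1 carries essentially all the content, while part~2 is a short convexity/uniqueness bookkeeping argument built on part~1.

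For part~1, the relations $(A+\lambda^* I)x^* = -a$, $\|x^*\|^2 = \delta_1^2$ together with the existence and uniqueness of $\lambda^* \in \left(\max\{0,-\lambda_2\},-\lambda_1\right)$ are exactly Theorem~\ref{nclng1}; the only genuinely new assertion is $\phi'(\lambda^*)\ge 0$. First I would note that since this interval is nonempty we have $\lambda_1<\lambda_2$, so $\lambda_1$ is simple and $\lambda_1+\lambda^*<0<\lambda_i+\lambda^*$ for all $i\ge 2$; hence $A+\lambda^* I$ is nonsingular with exactly one negative eigenvalue (eigenvector $q_1$), and from $(A+\lambda^* I)x^*=-a$ we get $x^*=-(A+\lambda^* I)^{-1}a$, so in particular $\phi(\lambda^*)=\|x^*\|^2=\delta_1^2$. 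Next I would convert the matrix condition $V^T(A+\lambda^* I)V\succeq 0$ into a scalar one: since $\left[\tfrac{1}{\|x^*\|}x^*\mid V\right]$ is orthogonal, this says $A+\lambda^* I$ is positive semidefinite on the hyperplane $x^{*\bot}$, and because $A+\lambda^* I$ has exactly one negative eigenvalue a standard inertia argument (minimize $d^T(A+\lambda^* I)d$ over $d\perp x^*$, noting that only the $q_1$-component of $d$ can make this negative) shows this is equivalent to $x^{*T}(A+\lambda^* I)^{-1}x^*\le 0$. Finally, using $x^*=-(A+\lambda^* I)^{-1}a$ and the spectral formulas in (\ref{flng6}), one computes $x^{*T}(A+\lambda^* I)^{-1}x^* = a^T(A+\lambda^* I)^{-3}a = \sum_{i=1}^n (q_i^Ta)^2/(\lambda_i+\lambda^*)^3 = -\tfrac12\phi'(\lambda^*)$, whence $\phi'(\lambda^*)\ge 0$.

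For part~2, I would argue via uniqueness of the multiplier. From (\ref{flng6}) we have $\phi''>0$ on $\left(\max\{0,-\lambda_2\},-\lambda_1\right)$, so $\phi'$ is strictly increasing and $\phi(\lambda)=\delta_1^2$ has at most two roots there; if there were two distinct roots $\lambda_-<\lambda_+$, then by the mean value theorem $\phi'$ vanishes somewhere in $(\lambda_-,\lambda_+)$, and strict monotonicity forces $\phi'(\lambda_-)<0$. Therefore at most one value $\lambda^*$ in the interval can satisfy simultaneously $\phi(\lambda^*)=\delta_1^2$ and $\phi'(\lambda^*)\ge 0$. By part~1 every LNGM produces such a $\lambda^*$, and since $x^*=-(A+\lambda^* I)^{-1}a$ the LNGM is completely determined by its multiplier; hence there is at most one LNGM.

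The step I expect to be the main obstacle is the reduction of $V^T(A+\lambda^* I)V\succeq 0$ to $x^{*T}(A+\lambda^* I)^{-1}x^*\le 0$: one must make sure that $A+\lambda^* I$ has \emph{exactly} one negative eigenvalue, which requires the interval to be nonempty (so $\lambda_1<\lambda_2$ and $\lambda_1$ is simple), and one should observe, via Corollary~\ref{colng1}, that an LNGM forces $q_1^Ta\ne 0$, so that $x^*$ genuinely has a nonzero $q_1$-component and the constraint $d\perp x^*$ really restricts the negative direction. Once that equivalence is established, the remaining pieces — evaluating $\phi'(\lambda^*)$ from (\ref{flng6}) and the convexity argument for part~2 — are routine.
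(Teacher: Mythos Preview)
Your argument is correct. Note, however, that the paper itself does not supply a proof of this theorem: it is stated as a citation of Mart\'inez~\cite{Mart:94}, preceded only by the one-line remark that strict convexity of $\phi$ on $\left(\max\{0,-\lambda_2\},-\lambda_1\right)$ forces at most two roots of $\phi(\lambda)=\delta_1^2$. So there is no in-paper proof to compare against beyond that hint.

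Your proposal effectively reconstructs the standard Mart\'inez argument. For part~1, the reduction of $V^T(A+\lambda^* I)V\succeq 0$ to $x^{*T}(A+\lambda^* I)^{-1}x^*\le 0$ is valid precisely because (i) $\lambda^*\in(-\lambda_2,-\lambda_1)$ makes $A+\lambda^* I$ nonsingular with a \emph{simple} negative eigenvalue, and (ii) Corollary~\ref{colng1} guarantees $q_1^Ta\ne 0$, so $x^*$ has a nonzero $q_1$-component; you flagged both points correctly. The identity $x^{*T}(A+\lambda^* I)^{-1}x^*=a^T(A+\lambda^* I)^{-3}a=-\tfrac12\phi'(\lambda^*)$ then finishes it. For part~2, your convexity/mean-value argument is exactly what the paper's preceding sentence is gesturing at, and the final step---that $x^*=-(A+\lambda^* I)^{-1}a$ recovers the LNGM uniquely from its multiplier---is the right way to close the loop.
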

Based on this intuition,
given an instance of \TRS with global minimizer $x^*$ and LNGM
$\bar{x}$, one can enforce another ellipsoid that cuts off $x^*$ but leaves $\bar{x}$ feasible. For the resulting instance of \TTRSp, $\bar{x}$ becomes a natural candidate for the  optimal solution of \TTRSp, although points near $x^*$ that remain feasible are good candidates as well.
 In this paper, we take advantage of the efficient algorithm developed in \cite{SalahiTaatiw:20} to find LNGM within the proposed   hybrid algorithm  in the next section.

It is worth noting that the \TRS by removing the first constraint of \TTRS  is as following:
\begin{align}\label{trsAB}
\min \quad &\frac12x^TAx +a^Tx \notag \\
&\hspace{-.7cm}{\rm s.t.}\ (x-c)^TB(x-c) \leq \delta_2^2
\end{align}
which can be easily   transformed  to  (\ref{trsAD})  by   change of variables.
\subsection{{\bf Optimality conditions and strong duality for \TTRS}}
Let $x \in \mathbb{R}^n$ be a local solution
of \TTRS that satisfies the linear independence constraint qualification (LICQ). Then there exists a pair of Lagrange multipliers
$(\gamma, \mu) \in \mathbb{R}^2$ satisfying the KKT conditions:
\begin{align}\label{KKTTT1}
 &H(\gamma, \mu)x=-a  + \mu B c, \nonumber \\
  &||x||^2\leq \delta_1^2,~~~(x-c)^TB(x-c) \leq \delta_2^2,  \nonumber \\
   &\gamma\left( ||x||^2- \delta_1^2 \right)=0, \\
  &\mu\left( (x-c)^TB(x-c) - \delta_2^2 \right)=0, \nonumber  \\
&\gamma\geq 0,~~\mu \geq 0,  \nonumber
\end{align}
 where $ H(\gamma, \mu) = A+\mu B + \gamma I_n $
is the Hessian of the Lagrangian.
\begin{theorem}[\cite{Bomze:15}]\label{bomse91}
Let at  the KKT point $\bar{x}$, both constraints of \TTRS be active. \\
(a) If $H(\bar{\gamma}, \bar{\mu})$ is  positive definite   in $\lbrace \bar{x}, B(\bar{x}-c) \rbrace^\perp$ for some multipliers pair $(\bar{\gamma}, \bar{\mu}) \in \mathbb{R}^2_+$  satisfying
the KKT conditions (\ref{KKTTT1}), then $\bar{x}$ is a local minimizer of \TTRSp. \\
(b) If $\bar{x}$ is a local minimizer of \TTRSp, then $H(\bar{\gamma}, \bar{\mu})$ is  positive semidefinite in $\lbrace \bar{x}, B(\bar{x}-c) \rbrace^\perp$ for some multipliers
pair $(\bar{\gamma}, \bar{\mu}) \in \mathbb{R}^2_+$  satisfying the KKT conditions (\ref{KKTTT1}).
\end{theorem}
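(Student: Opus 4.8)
The plan is to read Theorem~\ref{bomse91} as the specialization to \TTRS of the classical second-order sufficient and necessary optimality conditions for smooth nonlinear programming, the only problem-specific ingredient being the identification of the relevant ``critical subspace'' as $T:=\lbrace\bar{x},B(\bar{x}-c)\rbrace^{\perp}$. Write $q(x):=\tfrac12 x^TAx+a^Tx$ for the objective and $H:=H(\bar{\gamma},\bar{\mu})$. First I would record the geometry: when both constraints are active at $\bar{x}$, the gradients of $\|x\|^2-\delta_1^2$ and of $(x-c)^TB(x-c)-\delta_2^2$ there are $2\bar{x}$ and $2B(\bar{x}-c)$, so $T$ is their common kernel; since the equalities $\bar{x}^Td=0=(B(\bar{x}-c))^Td$ are more restrictive than the one-sided conditions $\bar{x}^Td\le 0$, $(B(\bar{x}-c))^Td\le 0$ (with $\nabla q(\bar{x})^Td=0$ then automatic by the KKT identity), the subspace $T$ is contained in the critical cone $C(\bar{x})$ at $\bar{x}$, whatever the signs of the multipliers. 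Second I would bring in the quadratic Lagrangian $L(x):=\tfrac12 x^THx+(a-\bar{\mu}Bc)^Tx$: its Hessian is $H$, and it is normalized so that the first equation of (\ref{KKTTT1}) reads exactly $\nabla L(\bar{x})=0$. Completing the square gives $L(x)=q(x)+\tfrac{\bar{\mu}}{2}\big((x-c)^TB(x-c)-\delta_2^2\big)+\tfrac{\bar{\gamma}}{2}\big(\|x\|^2-\delta_1^2\big)+\kappa$ for a constant $\kappa$, so that $\bar{\gamma},\bar{\mu}\ge 0$ together with the activity of both constraints at $\bar{x}$ yields $L(x)\le q(x)+\kappa$ on the feasible set, with equality at $\bar{x}$; hence $q(x)-q(\bar{x})\ge L(x)-L(\bar{x})$ for every feasible $x$.

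For part (a) I would argue by contradiction. Suppose there were a feasible sequence $x_k=\bar{x}+t_kd_k$ with $\|d_k\|=1$, $t_k\downarrow 0$, $d_k\to d$ and $q(x_k)<q(\bar{x})$. Expanding the two active constraints and $q$ to first order in $t_k$ gives, in the limit, $\bar{x}^Td\le 0$, $(B(\bar{x}-c))^Td\le 0$ and $\nabla q(\bar{x})^Td\le 0$; substituting the stationarity identity $\nabla q(\bar{x})=-\bar{\gamma}\bar{x}-\bar{\mu}B(\bar{x}-c)$, which makes $\nabla q(\bar{x})^Td\ge 0$, forces $\nabla q(\bar{x})^Td=0$ and hence $\bar{\gamma}\,\bar{x}^Td=\bar{\mu}\,(B(\bar{x}-c))^Td=0$. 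When both multipliers are positive this puts $d\in T$, and then $q(x_k)-q(\bar{x})\ge L(x_k)-L(\bar{x})=\tfrac{t_k^2}{2}\,d_k^THd_k\to\tfrac12 d^THd>0$ by positive definiteness of $H$ on $T$ (the limit carrying over to $d_k$ by continuity even though $d_k$ itself need not lie in $T$), contradicting $q(x_k)<q(\bar{x})$; this argument in fact shows $\bar{x}$ is a strict local minimizer.

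The main obstacle is the degenerate case in which a multiplier vanishes, say $\bar{\gamma}=0$: then one only obtains $(B(\bar{x}-c))^Td=0$ while $\bar{x}^Td\le 0$ may be strict, so $d$ need not lie in $T$, and positive definiteness of $H$ on $T$ is then a priori weaker than the textbook second-order sufficient condition, which asks for definiteness on the whole critical cone $C(\bar{x})$ --- here a half-space. Closing this gap requires controlling the curvature of $H$ along the tangent directions of the zero-multiplier constraint as well; a natural route is to use that along such a direction $x_k$ eventually lies strictly inside the first ellipsoid, so that near $\bar{x}$ the problem reduces to the single trust-region subproblem (\ref{trsAB}) --- and the ball term indeed drops out of $L$ precisely when $\bar{\gamma}=0$, so $q(x)-q(\bar{x})\ge L(x)-L(\bar{x})$ survives using only feasibility for the second ellipsoid. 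I expect this to be the delicate point, and it appears to need either strict complementarity --- the presumably intended regime, in which $C(\bar{x})$ collapses to $T$ and (a) is immediate from the displayed chain of inequalities --- or a strengthening of the hypothesis that also covers the zero-multiplier directions.

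Part (b) is the routine direction. Since $\bar{x}$ is a local minimizer satisfying the LICQ assumed in the paragraph preceding the theorem, the KKT pair $(\bar{\gamma},\bar{\mu})$ is unique and the second-order necessary condition gives $d^THd\ge 0$ for every $d\in C(\bar{x})$; restricting to the subspace $T\subseteq C(\bar{x})$ obtained in the first step shows that $H=H(\bar{\gamma},\bar{\mu})$ is positive semidefinite on $\lbrace\bar{x},B(\bar{x}-c)\rbrace^{\perp}$, which is exactly assertion (b).
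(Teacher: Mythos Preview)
The paper does not supply its own proof of Theorem~\ref{bomse91}; the result is simply quoted from \cite{Bomze:15}, so there is no in-paper argument to compare against. Your write-up is therefore being assessed on its own terms.

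Your identification of $T=\{\bar{x},B(\bar{x}-c)\}^{\perp}$ with the tangent space to the active set, the Lagrangian bounding $q(x)-q(\bar{x})\ge L(x)-L(\bar{x})$ on the feasible region, and the handling of part (b) via $T\subseteq C(\bar{x})$ are all correct and standard. Under strict complementarity your part (a) argument is complete.

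You have also correctly located the genuine difficulty: when a multiplier vanishes, say $\bar{\gamma}=0$, the critical cone is a half-space strictly containing $T$, and positive definiteness of $H$ on $T$ is formally weaker than the textbook SOSC. Your attempted patch --- observing that along a direction with $\bar{x}^Td<0$ the first ball becomes slack and $\bar{\gamma}=0$ makes the ball term drop from $L$ --- does not close the gap by itself, because the quadratic lower bound $L(x_k)-L(\bar{x})=\tfrac{t_k^2}{2}d_k^THd_k$ still needs a sign, and $d\notin T$. What is actually needed is an extra structural fact specific to quadratic constraints: for any $d$ with $\bar{x}^Td<0$ and $(B(\bar{x}-c))^Td=0$, one can perturb $d$ within the feasible cone to a nearby $\tilde d\in T$ (or, equivalently, argue via the curvature of the boundary manifolds) and use continuity of $v\mapsto v^THv$. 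The Bomze--Overton paper handles exactly this degenerate regime; your proposal is honest in flagging it as unresolved, but as written part (a) is only proved under strict complementarity.
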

The following theorem gives necessary optimality conditions for \TTRSp.
\begin{theorem}[Theorem 4.3, \cite{Peng:97}]\label{negeig}
Let $x^*$ be a global minimizer of   \TTRSp. If  $x^*$ satisfies LICQ, then there exist $\gamma^*,\mu^* \in \mathbb{R}^+$  such that
$$ H(\gamma^*, \mu^*) x^* = \mu^* Bc -a,$$
and $ H(\gamma^*, \mu^*)  $ has at least $n-1$ nonnegative eigenvalues.
\end{theorem}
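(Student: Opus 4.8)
Write $f(x)=\tfrac12 x^{T}Ax+a^{T}x$ for the objective. The stationarity identity is just the KKT system: $x^{*}$ is a global, hence local, minimizer of \TTRS at which LICQ holds, so (\ref{KKTTT1}) is satisfied at $x^{*}$, giving $\gamma^{*},\mu^{*}\ge 0$ with $H(\gamma^{*},\mu^{*})x^{*}=\mu^{*}Bc-a$. To bound the inertia of $H(\gamma^{*},\mu^{*})$ I would split into cases according to how many constraints are active at $x^{*}$. If at most one is active, or both are active but one of $\gamma^{*},\mu^{*}$ vanishes, then -- for the purpose of the eigenvalue count -- the situation reduces to a single-ellipsoid problem, and the second-order necessary condition at $x^{*}$ (positive semidefiniteness of $H(\gamma^{*},\mu^{*})$ on a critical cone that spans a space of dimension $\ge n-1$) forces at most one negative eigenvalue; this is the bound recorded, for the trust-region subproblem, in Martinez's Theorems~\ref{nclng1}--\ref{lng1}, to which these cases reduce after, if needed, the affine change of variables taking (\ref{trsAB}) to (\ref{trsAD}). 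The only genuinely nontrivial case -- treated below -- is that both constraints are active with $\gamma^{*},\mu^{*}>0$.

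In that case LICQ makes $x^{*}$ and $B(x^{*}-c)$ linearly independent, so $W:=\{x^{*},B(x^{*}-c)\}^{\perp}$ has dimension $n-2$, and Theorem~\ref{bomse91}(b) gives $v^{T}H(\gamma^{*},\mu^{*})v\ge 0$ for all $v\in W$; a dimension count (a negative-definite subspace of dimension $\ge 3$ would intersect $W$ nontrivially, a contradiction) then bounds the number of negative eigenvalues of $H:=H(\gamma^{*},\mu^{*})$ by two, and it remains to exclude the value two. Assume for contradiction that $H$ is negative definite on a two-dimensional subspace $N$. Then $N\cap W=\{0\}$, and since $A=H-\gamma^{*}I-\mu^{*}B$ with $\gamma^{*},\mu^{*}>0$ and $I,B\succ 0$, also $d^{T}Ad<0$ for all $d\in N\setminus\{0\}$.

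Let $\Omega$ be the feasible set of \TTRS -- the intersection of the two solid ellipsoids, a strictly convex body -- and set $P:=x^{*}+N$. Were $x^{*}$ in the relative interior of $\Omega\cap P$, the restriction of $f$ to $P$ would have a local minimum at $x^{*}$, contradicting $d^{T}Ad<0$ on $N\setminus\{0\}$; so $x^{*}$ lies on the relative boundary of $\Omega\cap P$. In the principal configuration $K:=\Omega\cap P$ is a planar convex body whose boundary is a closed curve made of arcs of the conics $C_{1}=\{x\in P:\|x\|^{2}=\delta_{1}^{2}\}$ and $C_{2}=\{x\in P:(x-c)^{T}B(x-c)=\delta_{2}^{2}\}$, both through $x^{*}$; these cross transversally at $x^{*}$, since a common tangent direction there would be a nonzero vector of $N$ orthogonal to both $x^{*}$ and $B(x^{*}-c)$, hence a nonzero vector of $N\cap W=\{0\}$. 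Thus $x^{*}$ is a genuine corner of $K$, and following $\partial K$ away from $x^{*}$ along its $C_{1}$-arc one reaches a further point $y\in C_{1}\cap C_{2}$ with $y\neq x^{*}$ (otherwise $\partial K$ would be all of $C_{1}$, making $x^{*}$ a smooth point of it). Both constraints of \TTRS are active at $y\in\Omega$, so with the quadratic Lagrangian $\ell(x):=f(x)+\tfrac{\gamma^{*}}{2}(\|x\|^{2}-\delta_{1}^{2})+\tfrac{\mu^{*}}{2}\big((x-c)^{T}B(x-c)-\delta_{2}^{2}\big)$ -- whose Hessian is $H$ and which has a critical point at $x^{*}$ by the stationarity identity -- complementarity gives $\ell(x^{*})=f(x^{*})$, $\ell(y)=f(y)$, and therefore $f(y)=\ell(y)=f(x^{*})+\tfrac12(y-x^{*})^{T}H(y-x^{*})<f(x^{*})$, since $y-x^{*}\in N\setminus\{0\}$; this contradicts the global optimality of $x^{*}$.

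I expect the last paragraph to be the main obstacle -- manufacturing, out of a two-dimensional negative-curvature subspace, a strictly better feasible point lying on the common boundary of the two ellipsoids. The delicate points, to be handled separately by more elementary arguments, are the degenerate positions: when $P$ is tangent to one ellipsoid at $x^{*}$ so that a conic degenerates to a point, when $\Omega\cap P$ collapses to $\{x^{*}\}$, or when $\Omega$ is not full-dimensional; one option is to perturb the data slightly, another is to enlarge $N$ to a three-dimensional subspace still carrying enough negative curvature. The reductions to the trust-region subproblem used in the other cases are routine once the second-order conditions, Martinez's results from Section~2, and Sylvester's law of inertia are invoked.
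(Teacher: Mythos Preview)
The paper gives no proof of this theorem: it is quoted verbatim as Theorem~4.3 of Peng--Yuan \cite{Peng:97} and used as a black box. So there is no ``paper's own proof'' to compare against; your argument stands on its own, and it is essentially the classical one going back to Yuan \cite{Yuan:90} and Peng--Yuan \cite{Peng:97}: a two-dimensional negative subspace for $H(\gamma^{*},\mu^{*})$ produces, in the affine $2$-plane through $x^{*}$, a second feasible point $y\neq x^{*}$ at which both constraints are active, and then the quadratic Lagrangian (which equals $f$ at both endpoints and has Hessian $H$) forces $f(y)<f(x^{*})$.

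Your case analysis and the main contradiction are correct. Two remarks sharpen what you wrote. First, the ``reduction to a single-ellipsoid problem'' in the easy cases does not require Mart\'inez's LNGM theory at all: the second-order necessary condition for \TTRS already gives $d^{T}H(\gamma^{*},\mu^{*})d\ge 0$ on a critical cone that, after the $d\mapsto -d$ symmetry, spans a subspace of dimension $\ge n-1$, and that is all you need. Second, the degeneracies you flag in the last paragraph do not occur, so no separate handling or perturbation is needed. Indeed, $N\cap W=\{0\}$ forces $N\not\subset(x^{*})^{\perp}$ and $N\not\subset(B(x^{*}-c))^{\perp}$ (otherwise a dimension count inside the corresponding hyperplane would give $\dim(N\cap W)\ge 1$), so $P$ is tangent to neither ellipsoid and both $C_{1},C_{2}$ are genuine ellipses; transversality at $x^{*}$ then makes $K=\Omega\cap P$ a planar convex body with nonempty interior and $x^{*}$ a true corner; and LICQ already rules out $\Omega$ failing to be full-dimensional, since external tangency of the two ellipsoids would make $x^{*}$ and $B(x^{*}-c)$ parallel. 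With these observations the ``principal configuration'' is the only configuration, and your proof is complete as written.
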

A sufficient condition for the optimality of minimizing
a quadratic function over two quadratic inequality constraints, when one of them is
strictly convex, is presented in  Theorem 2.1 of \cite{WAiZhang:15}. Since \TTRS is a special case,  we
immediately have the following theorem.
\begin{theorem}[Theorem 2.5, \cite{Yuan:90}] \label{globalsemi}
Let $x^*$ be feasible for \TTRSp. If there are two multipliers $\gamma^*,\mu^* \in \mathbb{R}^+$
 such that
 \begin{align*}
   &H(\gamma^*, \mu^*) x^* =  \mu^* Bc -a, \\
   &\gamma^* \left( ||x^*||^2 -\delta_1^2 \right) =0, \\
   &\mu^* \left(  (x^*-c)^T B (x^*-c) -\delta_2^2 \right) =0, \\
  &H(\gamma^*, \mu^*)  \succeq 0,
 \end{align*}
 then $x^*$  is a global solution of  \TTRSp.
\end{theorem}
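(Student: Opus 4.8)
The plan is to run the classical Lagrangian-sufficiency (weak duality) argument, which for this problem is essentially self-contained. Introduce the Lagrangian built from the prescribed multipliers,
\begin{align*}
\ell(x) := \frac12 x^TAx + a^Tx + \frac{\gamma^*}{2}\left(\|x\|^2 - \delta_1^2\right) + \frac{\mu^*}{2}\left((x-c)^TB(x-c) - \delta_2^2\right).
\end{align*}
First I would differentiate: $\nabla \ell(x) = (A + \gamma^* I + \mu^* B)x + a - \mu^* Bc = H(\gamma^*,\mu^*)x + a - \mu^* Bc$ and $\nabla^2 \ell(x) = H(\gamma^*,\mu^*)$. The stationarity hypothesis $H(\gamma^*,\mu^*)x^* = \mu^* Bc - a$ is exactly $\nabla\ell(x^*) = 0$, and $H(\gamma^*,\mu^*)\succeq 0$ says $\ell$ is convex on $\mathbb{R}^n$; together these imply $x^*$ is an unconstrained global minimizer of $\ell$.

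Next I would compare $\ell$ with the objective $f(x) = \tfrac12 x^TAx + a^Tx$ on the feasible set of \TTRSp. For any feasible $x$, both residuals $\|x\|^2 - \delta_1^2$ and $(x-c)^TB(x-c) - \delta_2^2$ are $\le 0$, so since $\gamma^*,\mu^* \ge 0$ we get $\ell(x) \le f(x)$. At the candidate point, the two complementarity equations $\gamma^*(\|x^*\|^2 - \delta_1^2) = 0$ and $\mu^*((x^*-c)^TB(x^*-c) - \delta_2^2) = 0$ give $\ell(x^*) = f(x^*)$.

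Chaining these three facts yields, for every feasible $x$,
\begin{align*}
f(x) \;\ge\; \ell(x) \;\ge\; \ell(x^*) \;=\; f(x^*),
\end{align*}
so $x^*$ attains the minimum of $f$ over the feasible region and is therefore a global solution of \TTRSp. There is no real obstacle; the only points needing care are matching the $\tfrac12$-normalisation of the penalty terms so that $\nabla\ell(x^*)=0$ coincides with the stated equation $H(\gamma^*,\mu^*)x^* = \mu^*Bc - a$, and applying each complementarity condition to its corresponding term. Note this sufficiency argument uses neither Theorem \ref{bomse91} nor Theorem \ref{negeig}, and in particular requires no constraint qualification at $x^*$ nor any eigenvalue sign condition beyond $H(\gamma^*,\mu^*)\succeq 0$.
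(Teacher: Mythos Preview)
Your argument is correct and is precisely the classical Lagrangian weak-duality/S-procedure sufficiency proof that underlies this result. The paper itself does not supply a proof of Theorem~\ref{globalsemi}; it merely attributes the statement to Yuan~\cite{Yuan:90} (and remarks that it also follows as a special case of Theorem~2.1 in Ai--Zhang~\cite{WAiZhang:15}). So there is no ``paper's own proof'' to compare against, and your self-contained derivation is exactly the standard one those references give. The normalisation check you flag (the factor $\tfrac12$ in the penalty terms so that $\nabla\ell(x^*)=0$ matches $H(\gamma^*,\mu^*)x^*=\mu^*Bc-a$) is handled correctly.
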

\begin{theorem}
Suppose that strong duality holds for \ref{mrs}.  If  its optimal solution  $x^*$ is not the optimal solution of \TRS   (\ref{trsAD}) and (\ref{trsAB}), then
 \begin{align*}
  &||x^*||^2=\delta_1^2, \\
  &(x^*-c)^TB(x^*-c)=\delta_2^2.
 \end{align*}
\end{theorem}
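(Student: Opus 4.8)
The plan is to read off from the strong-duality assumption a Karush--Kuhn--Tucker (KKT) pair for \TTRS together with the positive semidefiniteness of the Hessian of the Lagrangian, and then to observe that if either of the two constraints were inactive at $x^*$, complementary slackness would kill the corresponding multiplier and leave behind exactly the (necessary and sufficient) global optimality conditions of one of the two underlying trust-region subproblems --- which would contradict the hypothesis that $x^*$ solves neither (\ref{trsAD}) nor (\ref{trsAB}).

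First I would make precise what strong duality gives. The feasible set of \TTRS is compact and the objective is continuous, so the optimum $x^*$ is attained; strong duality then yields an optimal dual pair $(\gamma^*,\mu^*)\in\mathbb{R}^2_+$ for which $x^*$ minimizes the Lagrangian $L(x)=\frac12 x^TAx+a^Tx+\frac{\gamma^*}{2}(\|x\|^2-\delta_1^2)+\frac{\mu^*}{2}\big((x-c)^TB(x-c)-\delta_2^2\big)$ over $\mathbb{R}^n$, with zero duality gap. Since $L$ is a quadratic that is bounded below, its Hessian $H(\gamma^*,\mu^*)=A+\mu^*B+\gamma^*I$ is positive semidefinite and its gradient vanishes at $x^*$, i.e. $H(\gamma^*,\mu^*)x^*=-a+\mu^*Bc$; equality of the primal and Lagrangian optimal values then forces the complementary slackness relations $\gamma^*(\|x^*\|^2-\delta_1^2)=0$ and $\mu^*\big((x^*-c)^TB(x^*-c)-\delta_2^2\big)=0$. (Equivalently, $x^*$ meets all the hypotheses of Theorem~\ref{globalsemi}.)

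Next I would argue by contradiction. Suppose $\|x^*\|^2<\delta_1^2$. Then complementary slackness gives $\gamma^*=0$, hence $(A+\mu^*B)x^*=\mu^*Bc-a$, $A+\mu^*B\succeq0$, $\mu^*\ge0$, and $\mu^*\big((x^*-c)^TB(x^*-c)-\delta_2^2\big)=0$, while $x^*$ is feasible for (\ref{trsAB}) because it is feasible for \TTRS. Performing the change of variables that, as noted right after (\ref{trsAB}), transforms (\ref{trsAB}) into the standard form (\ref{trsAD}), these are precisely the classical necessary and sufficient conditions for global optimality of a trust-region subproblem; therefore $x^*$ is a global minimizer of (\ref{trsAB}), contradicting the hypothesis. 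Symmetrically, if $(x^*-c)^TB(x^*-c)<\delta_2^2$, then $\mu^*=0$, so $(A+\gamma^*I)x^*=-a$, $A+\gamma^*I\succeq0$, $\gamma^*\ge0$, $\gamma^*(\|x^*\|^2-\delta_1^2)=0$, and $\|x^*\|^2\le\delta_1^2$; these are the global optimality conditions for (\ref{trsAD}), so $x^*$ solves (\ref{trsAD}), again a contradiction. (The sub-case $\gamma^*=\mu^*=0$ cannot occur at all, since it would force $A=H(0,0)\succeq0$, contrary to the standing assumption that $A$ is indefinite.) Hence both constraints are active at $x^*$.

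The only delicate point is the first step: verifying rigorously that ``strong duality holds'' indeed delivers an \emph{attained} dual solution together with a positive semidefinite Lagrangian Hessian and complementary slackness --- equivalently, that $x^*$ fulfills the hypotheses of Theorem~\ref{globalsemi}. Granting that, the remainder is a short case distinction driven purely by complementary slackness and the classical global optimality characterization of the trust-region subproblem.
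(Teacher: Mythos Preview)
Your argument is correct and is in spirit the same as the paper's, but the presentation takes a slightly different route. The paper argues as follows: if, say, $(x^*-c)^TB(x^*-c)<\delta_2^2$, then $x^*$ is a local minimizer of the single-ball \TRS (\ref{trsAD}), hence by Mart\'inez's classification it is either the global minimizer or the LNGM of (\ref{trsAD}); the LNGM case is then ruled out because strong duality fails at a LNGM, leaving the contradiction. You bypass the Mart\'inez dichotomy entirely: starting from the KKT system and $H(\gamma^*,\mu^*)\succeq0$ that strong duality provides, you let complementary slackness force $\mu^*=0$ (resp.\ $\gamma^*=0$) and recognise what remains as the \emph{sufficient} global optimality conditions for (\ref{trsAD}) (resp.\ (\ref{trsAB})). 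Your version is a bit more self-contained since it does not need Theorems~\ref{nclng1}--\ref{lng1}; the paper's version makes explicit the link with the LNGM, which is thematically relevant to the hybrid algorithm. Your caveat about the first step is well placed: in this paper ``strong duality holds'' is used synonymously with tightness of the SDP relaxation, which by the quoted Nesterov--Wolkowicz result is exactly the existence of multipliers $(\gamma^*,\mu^*)\ge0$ satisfying KKT with $H(\gamma^*,\mu^*)\succeq0$, so that step is indeed available.
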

\begin{proof}
Let  $(x^*-c)^TB(x^*-c)<\delta_2^2$, then either $x^*$ is the global minimizer or a LNGM of  \TRS  (\ref{trsAD}). Since strong
duality fails at LNGM, we conclude  it is the global minimizer of
\TRS  (\ref{trsAD}) which is in contradiction with our assumption. Thus
  $(x^*-c)^TB(x^*-c)=\delta_2^2$.
Similarly, it can be proved that  $||x^*||^2=\delta_1^2$.
\end{proof}
 We now discuss special cases of \TTRS where strong duality holds.
\begin{theorem}\label{theos1}
Suppose that $A=\mathrm{diag}(\delta_1, \cdots, \delta_n)$  and $B=\mathrm{diag}(\alpha_1, \cdots, \alpha_n)$ such that
for all $i<j$,  $\delta_i \leq \delta_j$ and  $\alpha_i \leq \alpha_j$. If   $\delta_1 = \delta_2$ and $\alpha_1 =\alpha_2$, then
 strong duality holds for \TTRSp.
\end{theorem}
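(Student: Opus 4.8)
The plan is to exploit the diagonal structure of $A$ and $B$ together with the coincidence of the two smallest diagonal entries to upgrade the ``at least $n-1$ nonnegative eigenvalues'' guaranteed by Theorem~\ref{negeig} into genuine positive semidefiniteness of the Hessian of the Lagrangian at a global minimizer; by Theorem~\ref{globalsemi} this is exactly what certifies zero duality gap.

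First I would record the elementary structural fact. Since $A=\mathrm{diag}(\delta_1,\dots,\delta_n)$ and $B=\mathrm{diag}(\alpha_1,\dots,\alpha_n)$, for any $\gamma\in\mathbb{R}$ and any $\mu\ge 0$ the matrix $H(\gamma,\mu)=A+\mu B+\gamma I$ is diagonal with entries $\delta_i+\mu\alpha_i+\gamma$. Because both $i\mapsto\delta_i$ and $i\mapsto\alpha_i$ are nondecreasing and $\mu\ge 0$, the sequence $\big(\delta_i+\mu\alpha_i+\gamma\big)_{i=1}^n$ is nondecreasing, so the sorted eigenvalues of $H(\gamma,\mu)$ are precisely $\delta_1+\mu\alpha_1+\gamma\le\delta_2+\mu\alpha_2+\gamma\le\cdots\le\delta_n+\mu\alpha_n+\gamma$. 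The hypotheses $\delta_1=\delta_2$ and $\alpha_1=\alpha_2$ then give $\lambda_1\big(H(\gamma,\mu)\big)=\lambda_2\big(H(\gamma,\mu)\big)$ for every $\mu\ge 0$; in particular $H(\gamma,\mu)$ can never have exactly one negative eigenvalue.

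Next, let $x^*$ be a global minimizer of \ref{mrs} (the feasible set is compact, and we may assume it nonempty, otherwise there is nothing to prove) and assume, as in the hypothesis of Theorem~\ref{negeig}, that LICQ holds at $x^*$. Applying Theorem~\ref{negeig} yields $\gamma^*,\mu^*\ge 0$ with $H(\gamma^*,\mu^*)x^*=\mu^*Bc-a$ and such that $H(\gamma^*,\mu^*)$ has at least $n-1$ nonnegative eigenvalues; since its eigenvalues are sorted as above this forces $\lambda_2\big(H(\gamma^*,\mu^*)\big)\ge 0$, and the previous paragraph gives $\lambda_1\big(H(\gamma^*,\mu^*)\big)=\lambda_2\big(H(\gamma^*,\mu^*)\big)\ge 0$, i.e. $H(\gamma^*,\mu^*)\succeq 0$. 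Under LICQ the multipliers produced by Theorem~\ref{negeig} are those of the KKT system \eqref{KKTTT1}, in particular they satisfy the two complementarity conditions; hence $(x^*,\gamma^*,\mu^*)$ meets all the hypotheses of Theorem~\ref{globalsemi}. Equivalently, $H(\gamma^*,\mu^*)\succeq 0$ makes the Lagrangian $L(\cdot,\gamma^*,\mu^*)$ convex with $x^*$ as an unconstrained minimizer (by the stationarity equation), so the dual objective at $(\gamma^*,\mu^*)$ equals $L(x^*,\gamma^*,\mu^*)$, which by complementarity equals the optimal value of \ref{mrs}; combined with weak duality this is precisely strong duality.

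The one step where I expect to have to be careful is the LICQ assumption at $x^*$. If LICQ fails there, then (the ellipsoids being full-dimensional, no active constraint gradient can vanish) the gradients of the two active constraints at $x^*$ are parallel, so one constraint is locally redundant and the problem reduces near $x^*$ to a single trust-region subproblem, for which strong duality is classical; alternatively one perturbs the data $(c,\delta_1,\delta_2)$ to restore LICQ and passes to the limit. Aside from that degenerate case the argument is an immediate consequence of the diagonal, ``double smallest eigenvalue'' structure and involves essentially no computation.
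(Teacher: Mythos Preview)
Your argument is correct and is essentially the same as the paper's: both exploit the diagonal structure to see that the two smallest eigenvalues of $H(\gamma,\mu)$ coincide, and then invoke Theorem~\ref{negeig} to rule out a single negative eigenvalue and force $H(\gamma^*,\mu^*)\succeq 0$. Your write-up is in fact more complete than the paper's, since you explicitly close the loop via Theorem~\ref{globalsemi} and discuss the LICQ-degenerate case, which the paper simply assumes away.
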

\begin{proof}
Let $\bar{x}$ be a global optimal solution   of \TTRS that satisfies LICQ. Then   there exist two non-negative multipliers $\gamma$ and $\mu$ such that   KKT conditions (\ref{KKTTT1}) hold. We have
\begin{align} \label{fthes1}
 A+ \gamma I_n +\mu B &=\mathrm{diag}(\delta_1, \delta_2, \cdots, \delta_n) + \gamma \mathrm{diag}(1, 1, \cdots, 1)
 +\mu \mathrm{diag}(\alpha_1, \alpha_2, \cdots, \alpha_n) \nonumber \\
 & =\mathrm{diag}(\delta_1 + \gamma+ \mu \alpha_1, \delta_2 + \gamma+ \mu \alpha_2, \cdots, \delta_n + \gamma+ \mu \alpha_n).
\end{align}
Then,
 \begin{align*} 
  \forall i \in \lbrace 3 ,\cdots, n  \rbrace , ~~\delta_1 + \gamma+ \mu \alpha_1= \delta_2 + \gamma+ \mu \alpha_2 \leq \delta_i + \gamma+ \mu \alpha_i.
\end{align*}
If $\delta_1 + \gamma+ \mu \alpha_1<0$, then $H(\gamma, \mu)$ has two   negative eigenvalues  which   contradicts  Theorem \ref{negeig}.\qed
\end{proof}
\begin{theorem}
Suppose that $A=\mathrm{diag}(\delta_1, \cdots, \delta_n)$  and $B=\mathrm{diag}(\alpha_1, \cdots, \alpha_n)$ such that
for all $i<j$, $\delta_i \leq \delta_j$ and  $\alpha_i \leq \alpha_j$. If   $\delta_1 = \delta_2=\delta_3$ and $\alpha_1 =\alpha_2=\alpha_3$, then any local minimum of \TTRS is  a global minimum.
\end{theorem}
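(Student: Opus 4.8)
The plan is to refine the argument behind Theorem~\ref{theos1}: there, the coincidence of the two smallest eigenvalues forced the Lagrangian Hessian to be positive semidefinite at a global minimizer (hence strong duality), and here the coincidence of the \emph{three} smallest eigenvalues will upgrade the second-order necessary condition of Theorem~\ref{bomse91}(b) to global positive semidefiniteness at an \emph{arbitrary} local minimizer. Let $\bar x$ be a local minimizer of \TTRS which, as throughout the paper, we take to satisfy LICQ, and distinguish cases by which constraints are active at $\bar x$. If neither is active, then $\bar x$ is an unconstrained local minimizer of $\tfrac12 x^TAx+a^Tx$, which forces $A\succeq0$ and contradicts indefiniteness. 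If only the first constraint is active, then \TTRS coincides locally with the trust-region subproblem~(\ref{trsAD}), so $\bar x$ is a local minimizer of~(\ref{trsAD}); but $\lambda_1=\delta_1=\delta_2=\lambda_2<0$ (indefiniteness gives $\delta_1<0$), so the interval $\bigl(\max\{0,-\lambda_2\},-\lambda_1\bigr)$ in Theorem~\ref{nclng1} is empty, (\ref{trsAD}) has no local non-global minimizer, and hence $\bar x$ is the global minimizer of~(\ref{trsAD}); since the feasible set of \TTRS lies in the ball $\|x\|^2\le\delta_1^2$ and $\bar x$ is feasible for \TTRSp, it is a global solution of \TTRSp. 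If only the second constraint is active, I would pass to the variables $y=B^{1/2}(x-c)$, which turn~(\ref{trsAB}) into a trust-region subproblem with matrix $\tilde A=B^{-1/2}AB^{-1/2}=\mathrm{diag}(\delta_i/\alpha_i)$; since $\delta_1<0$ while $\delta_i\ge\delta_1$ and $\alpha_i\ge\alpha_1>0$ give $\delta_i/\alpha_i\ge\delta_1/\alpha_i\ge\delta_1/\alpha_1$, the common value $\delta_1/\alpha_1=\delta_2/\alpha_2=\delta_3/\alpha_3$ is the smallest diagonal entry of $\tilde A$, so the two smallest eigenvalues of $\tilde A$ coincide, no local non-global minimizer exists, and $\bar x$ is global for~(\ref{trsAB}), hence for \TTRSp.

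There remains the case in which both constraints are active at $\bar x$. By Theorem~\ref{bomse91}(b) there is a pair $(\bar\gamma,\bar\mu)\in\mathbb R^2_+$ satisfying the KKT system~(\ref{KKTTT1}) with $H(\bar\gamma,\bar\mu)$ positive semidefinite on $S:=\{\bar x,\,B(\bar x-c)\}^{\bot}$; since $\mathrm{codim}\,S\le2$ and any subspace on which $H(\bar\gamma,\bar\mu)$ is negative definite must meet $S$ trivially, $H(\bar\gamma,\bar\mu)$ has at most two negative eigenvalues. As $A$ and $B$ are diagonal, $H(\bar\gamma,\bar\mu)=\mathrm{diag}\bigl(\delta_i+\bar\gamma+\bar\mu\alpha_i\bigr)$, and exactly as in~(\ref{fthes1}) we get $d:=\delta_1+\bar\gamma+\bar\mu\alpha_1=\delta_2+\bar\gamma+\bar\mu\alpha_2=\delta_3+\bar\gamma+\bar\mu\alpha_3$ while $\delta_i+\bar\gamma+\bar\mu\alpha_i\ge d$ for every $i$. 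If $d<0$, the first three diagonal entries would be negative and $H(\bar\gamma,\bar\mu)$ would have at least three negative eigenvalues, a contradiction; hence $d\ge0$, so every diagonal entry of $H(\bar\gamma,\bar\mu)$ is nonnegative, i.e.\ $H(\bar\gamma,\bar\mu)\succeq0$. Together with the KKT equation and the two complementarity conditions of~(\ref{KKTTT1}), this puts $\bar x$ and $(\bar\gamma,\bar\mu)$ under the hypotheses of Theorem~\ref{globalsemi}, so $\bar x$ is a global solution of \TTRSp.

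The crux is the both-active case --- upgrading the ``positive semidefinite on a codimension-$2$ subspace'' certificate of Theorem~\ref{bomse91}(b) to global semidefiniteness --- and this is exactly where three equal eigenvalues are needed: with only the two smallest eigenvalues equal, $d<0$ need not violate the at-most-two-negative-eigenvalues bound, so the contradiction disappears and one can, as in Theorem~\ref{theos1}, only obtain strong duality at a \emph{global} minimizer (through Theorem~\ref{negeig}, which tolerates one negative eigenvalue) rather than the present statement that \emph{every} local minimum is global. A secondary point to verify carefully is the reduction in the single-active-constraint cases, namely that coincidence of the smallest diagonal entries is preserved under the $B^{-1/2}$ scaling; it is indefiniteness, i.e.\ $\delta_1<0$, that guarantees $\delta_i/\alpha_i\ge\delta_1/\alpha_1$ there.
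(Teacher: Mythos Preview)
Your proof is correct, and its decisive step---the both-active case, where three equal smallest diagonal entries of $H(\bar\gamma,\bar\mu)$ together with the codimension-$2$ second-order necessary condition of Theorem~\ref{bomse91}(b) force $H\succeq0$ and hence global optimality via Theorem~\ref{globalsemi}---is exactly the paper's argument. Your additional case analysis for fewer active constraints (via the absence of an LNGM for the corresponding TRS) is sound but more than the paper does: it simply invokes ``the second order necessary optimality condition for \TTRSp'' uniformly, which in every activity pattern bounds the number of negative eigenvalues of $H(\gamma,\mu)$ by at most two (at most one if a single constraint is active, zero if none), so three negative eigenvalues is a contradiction in all cases and the same conclusion follows without the split.
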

\begin{proof}
Let $\bar{x}$ be a local minimum   of \TTRS that satisfies LICQ. Then  there  exist two nonnegative multipliers $\gamma$ and $\mu$ such that  satisfy   KKT conditions (\ref{KKTTT1}). From $\delta_1 = \delta_2=\delta_3$, $\alpha_1 =\alpha_2=\alpha_3$ and (\ref{fthes1}), we have
 \begin{align} \label{fthes2}
 \forall i \in \lbrace 4 ,\cdots, n  \rbrace,~~\delta_1 + \gamma+ \mu \alpha_1 &= \delta_2 + \gamma+ \mu \alpha_2 \nonumber \\
&= \delta_3 + \gamma+ \mu \alpha_3 \leq \delta_i + \gamma+ \mu \alpha_i.
\end{align}
If $\delta_1 + \gamma+ \mu \alpha_1<0$, then  the  three smallest eigenvalues of $ H(\gamma, \mu)$ are  negative, which  contradicts the second order necessary optimality condition   for \TTRSp. Therefore, any local minimum of TTRS is  a global minimum.\qed
\end{proof}
\section{Hybrid  algorithm}
In this section, we  present the hybrid algorithm  for solving \TTRSp. Before starting the ADMM steps, the feasibility of \TTRS and the  feasibility of global solutions of (\ref{trsAD}) and (\ref{trsAB})  for \TTRS are checked. The feasibility of \TTRS is checked by solving the following
\TRSp:
 \begin{align}\label{trsD}
v_{ch}^*=\min \quad &(x-c)^TB(x-c) - \delta_2^2, \notag \\
&\hspace{-.7cm}{\rm s.t.}\ ||x||^2 \leq \delta_1^2.
\end{align}
If $v_{ch}^*\leq 0$,  then \TTRS is feasible, otherwise it is infeasible.

In the case of  feasibility of \TTRSp, we check whether the optimal solution of \TRS (1) and (4) are optimal for \TTRSp. To do so, consider the following sets:
\begin{align*}
&E_1=\lbrace x~|~||x||^2 \leq \delta_1^2\rbrace, \\
&E_2=\lbrace x~|~(x-c)^TB(x-c) \leq \delta_2^2 \rbrace,\\
&E=E_1 \cap E_2.
\end{align*}
Let $x^*$ be the optimal solution of \TRS (\ref{trsAD}). If $x^* \in E$, then $x^*$  is a global
minimizer of \TTRSp. Also if the optimal solution of \TRS  (\ref{trsAB}) belongs to $E$, then it is optimal for \TTRSp.

It is worth mentioning that for (\ref{trsAD}) and (\ref{trsAB})  hard case  may occur.
Therefore, we should check whether these two problems have another optimal solution   in  the feasible region of  \TTRSp. This  procedure is  discussed for  (\ref{trsAB}) since    (\ref{trsAD}) is a special case of (\ref{trsAB}).
\begin{definition}[\cite{Adachi:15}]
A TRS   is   "hard case", if  $\mu^*=\lambda_n(A+\mu^*B) $, the largest generalized eigenvalue of the pencil $A+\mu B  $.
\end{definition}
\begin{theorem}[\cite{Adachi:15}]
A vector $x^*$ is an optimal solution to the \TRS (\ref{trsAB})   if and only if there exists $\mu^* \geq 0$ such that
\begin{align}
&(x^*-c)^TB(x^*-c) \leq  \delta_2^2,  \label{kktt1}\\
&(A+\mu^*B)x^* = \mu^* Bc -a,  \label{kktt2}\\
&\mu^* \left((x^*-c)^TB(x^*-c)- \delta_2^2  \right) =0,  \label{kktt3}\\
&A+\mu^*B \succeq 0. \label{kktt4}
\end{align}
\end{theorem}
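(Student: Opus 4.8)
The plan is to prove the two implications separately. The "if" direction I would dispatch by a direct completion-of-squares identity; the "only if" direction, which carries the real content, I would reduce to the classical characterization of global minimizers of an ordinary trust-region subproblem.

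For the "if" direction, assume $\mu^*\ge 0$ satisfies \eqref{kktt1}--\eqref{kktt4}, write $f(x)=\tfrac12 x^TAx+a^Tx$, and fix any $x$ feasible for \eqref{trsAB}. Since $f$ is quadratic, $f(x)-f(x^*)=(Ax^*+a)^T(x-x^*)+\tfrac12(x-x^*)^TA(x-x^*)$ exactly; I would substitute $Ax^*+a=-\mu^*B(x^*-c)$, a rearrangement of \eqref{kktt2}, into the linear part and regroup the $B$-terms using $(x-c)^TB(x-c)=(x^*-c)^TB(x^*-c)+2(x^*-c)^TB(x-x^*)+(x-x^*)^TB(x-x^*)$, arriving at the exact identity
\[
f(x)-f(x^*)=\tfrac12 (x-x^*)^T\bigl(A+\mu^*B\bigr)(x-x^*)+\tfrac{\mu^*}{2}\Bigl((x^*-c)^TB(x^*-c)-(x-c)^TB(x-c)\Bigr).
\]
The first term is nonnegative by \eqref{kktt4}; the second is nonnegative because, by the complementarity \eqref{kktt3}, either $\mu^*=0$, or $\mu^*>0$ and then $(x^*-c)^TB(x^*-c)=\delta_2^2\ge (x-c)^TB(x-c)$ by feasibility of $x$. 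Hence $f(x)\ge f(x^*)$ for every feasible $x$.

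For the "only if" direction, let $x^*$ be optimal. If $(x^*-c)^TB(x^*-c)<\delta_2^2$, then $x^*$ is an interior, hence unconstrained, local minimizer of $f$, so $Ax^*+a=0$ and $A\succeq 0$, and $\mu^*=0$ satisfies \eqref{kktt1}--\eqref{kktt4}. If the constraint is active, then $x^*\neq c$ (else the left-hand side would be $0<\delta_2^2$), so the constraint gradient $2B(x^*-c)$ is nonzero, LICQ holds, and the first-order KKT conditions yield $\mu^*\ge 0$ with $(A+\mu^*B)x^*=\mu^*Bc-a$; together with \eqref{kktt1} and \eqref{kktt3} this is everything except \eqref{kktt4}. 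The step I expect to be the main obstacle is precisely \eqref{kktt4}, since the second-order necessary condition gives only $v^T(A+\mu^*B)v\ge 0$ on the tangent subspace $\{v:(x^*-c)^TBv=0\}$, whereas global positive semidefiniteness is required. To obtain it I would use the change of variables already indicated after \eqref{trsAB}: factor $B=R^TR$ with $R$ nonsingular and set $y=R(x-c)$, turning \eqref{trsAB} into the ordinary TRS $\min\{\tfrac12 y^T\widetilde A y+\widetilde a^Ty:\ \|y\|^2\le\delta_2^2\}$ with $\widetilde A=R^{-T}AR^{-1}$ and $\widetilde a=R^{-T}(Ac+a)$. For this problem the classical result (e.g. \cite{con}) gives optimality of $y^*=R(x^*-c)$ iff there is $\mu^*\ge 0$ with $(\widetilde A+\mu^* I)y^*=-\widetilde a$, $\widetilde A+\mu^* I\succeq 0$, and $\mu^*(\|y^*\|^2-\delta_2^2)=0$; pulling these back through the congruence $\widetilde A+\mu^* I=R^{-T}(A+\mu^*B)R^{-1}$, which preserves positive semidefiniteness, and the identity $\|y^*\|^2=(x^*-c)^TB(x^*-c)$ recovers \eqref{kktt1}--\eqref{kktt4}. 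A self-contained alternative for \eqref{kktt4} is a Mor\'e--Sorensen-type reflection argument: if $A+\mu^*B$ had a negative eigenvalue, reflecting $x^*$ across the hyperplane $B$-orthogonal to the corresponding generalized eigenvector would produce a point still on the ellipsoid with strictly smaller objective, contradicting global optimality, with the hard case handled by a small norm-shifting perturbation along that eigenvector.
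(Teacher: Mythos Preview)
The paper does not supply its own proof of this theorem: it is stated with attribution to \cite{Adachi:15} and is immediately followed by the next cited result, so there is nothing to compare against directly. Your argument is correct and complete; in particular the completion-of-squares identity for the ``if'' direction is exactly right, and for the ``only if'' direction your reduction via the factorization $B=R^TR$ and the substitution $y=R(x-c)$ is precisely the transformation the paper itself alludes to just after \eqref{trsAB} (``which can be easily transformed to (\ref{trsAD}) by change of variables''), so your approach is the one the authors evidently have in mind. The alternative Mor\'e--Sorensen-style reflection argument you sketch would also work and is the standard self-contained route; either suffices here.
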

\begin{theorem}[\cite{Adachi:15}]
 Suppose  \TRS  (\ref{trsAB}) belongs to the "hard case" and
 $(\mu^*; x^*)$ satisfies (\ref{kktt1})-(\ref{kktt4}) with $\mu^*=\lambda_n(A+\mu B) $.
 Let $r =\mathrm{dim}\left(\mathcal{N}(A+\mu^*B)\right) $ and $V := [v_1, \cdots, v_r] $ be a basis of  $\mathcal{N}(A+\mu^*B)$ that is $B$-orthogonal, i.e., $V^TBV = I_r$. For an arbitrary $\sigma > 0$, define
\begin{align*}
&H :=A+\mu^*B + \sigma \sum_{i=1}^r Bv_i v_i^TB.
\end{align*}
Then $H$ is positive definite. Moreover, $q := -H^{-1}\left( A c + a \right)+c$ is the minimum-norm solution
to the linear system $(A+\mu^*B)x = \left(  \mu^*B c - a \right)$ in the $B$-norm, that is,
\begin{align}
 q =\mathrm{argmin}  \lbrace ||x-c||_B~|~ (A+\mu^*B)x = \mu^* Bc - a  \rbrace.
\end{align}
Furthermore,   there exists      $\alpha \in \mathbb{R}^r$ such that
 $x^*=q+V \alpha$ is a solution for \TRS (4).
\end{theorem}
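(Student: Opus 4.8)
The plan is to verify the four assertions of the theorem in order: (i) $H$ is positive definite; (ii) $q$ solves the linear system $(A+\mu^*B)x = \mu^*Bc-a$; (iii) among all such solutions $q$ is the one closest to $c$ in the $B$-norm; and (iv) the given KKT point $x^*$ has the form $q+V\alpha$. The two ingredients I would lean on throughout are the positive semidefiniteness $A+\mu^*B\succeq 0$ from (\ref{kktt4}) and the consistency of the linear system, which is guaranteed by (\ref{kktt2}) since $x^*$ solves it; the hard-case hypothesis $\mu^*=\lambda_n(A+\mu B)$ enters only in that it permits $\mathcal{N}(A+\mu^*B)$ to be nontrivial (otherwise $H=A+\mu^*B$ is already positive definite and $q$ is the unique solution).

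For (i), I would expand the quadratic form: for any $x$,
\[
x^THx = x^T(A+\mu^*B)x + \sigma\sum_{i=1}^r (v_i^TBx)^2 = x^T(A+\mu^*B)x + \sigma\|V^TBx\|^2,
\]
a sum of two nonnegative terms. If it vanishes, then $x^T(A+\mu^*B)x=0$ forces $(A+\mu^*B)x=0$ by semidefiniteness, so $x=V\beta$ for some $\beta$, while $V^TBx=0$ combined with $V^TBV=I_r$ gives $\beta=0$; hence $x=0$ and $H\succ 0$.

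For (ii) and (iii), the engine is a short computation. Since each $v_j\in\mathcal{N}(A+\mu^*B)$ and $v_i^TBv_j=\delta_{ij}$, one gets $Hv_j=\sigma Bv_j$, i.e. $H^{-1}BV=\sigma^{-1}V$, and by symmetry $V^TBH^{-1}=\sigma^{-1}V^T$; writing $A+\mu^*B = H-\sigma BVV^TB$ then yields the identity $(A+\mu^*B)H^{-1} = I - BVV^T$. Next, consistency gives $(A+\mu^*B)(x^*-c) = \mu^*Bc-a-(A+\mu^*B)c = -(Ac+a)$, so $Ac+a\in\Range(A+\mu^*B)=\mathcal{N}(A+\mu^*B)^\perp$ and thus $V^T(Ac+a)=0$. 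Combining, with $q-c=-H^{-1}(Ac+a)$,
\[
(A+\mu^*B)q = (A+\mu^*B)c - (I-BVV^T)(Ac+a) = (A+\mu^*B)c - (Ac+a) = \mu^*Bc-a,
\]
which is (ii). For (iii), note $(q-c)^TBV = -(Ac+a)^TH^{-1}BV = -\sigma^{-1}(Ac+a)^TV = 0$, so $q-c$ is $B$-orthogonal to $\mathrm{span}(V)$; since the solution set of the linear system is $q+\mathrm{span}(V)$, we have $\|q+V\beta-c\|_B^2 = \|q-c\|_B^2+\|\beta\|^2$ for every $\beta$, which is minimized uniquely at $\beta=0$.

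Finally, for (iv): $x^*$ and $q$ both solve $(A+\mu^*B)x=\mu^*Bc-a$, so $x^*-q\in\mathcal{N}(A+\mu^*B)=\mathrm{span}(V)$ and $x^*=q+V\alpha$ for a unique $\alpha$; conversely, since the feasible $x^*$ exists, $\|q-c\|_B\le\|x^*-c\|_B\le\delta_2$, so any $\alpha$ with $\|\alpha\|^2=\delta_2^2-\|q-c\|_B^2$ (and $\alpha=0$ when $\mu^*=0$, where already $A=A+\mu^*B\succeq 0$ and $\|q-c\|_B\le\delta_2$) makes $q+V\alpha$ satisfy all of (\ref{kktt1})--(\ref{kktt4}), hence solve (\ref{trsAB}). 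The only non-routine step is spotting the identity $(A+\mu^*B)H^{-1}=I-BVV^T$ and recognizing that the correction term it generates is annihilated precisely by the consistency relation $V^T(Ac+a)=0$; everything else is bookkeeping in the $B$-inner product.
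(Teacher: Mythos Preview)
The paper does not prove this theorem; it is quoted verbatim from \cite{Adachi:15} and stated without proof, so there is no in-paper argument to compare against. Your proof is correct and complete: the key observation $(A+\mu^*B)H^{-1}=I-BVV^T$ together with the consistency relation $V^T(Ac+a)=0$ cleanly disposes of parts (ii) and (iii), and parts (i) and (iv) are routine. The only remark is that your ``conversely'' clause in (iv) goes slightly beyond what the theorem asserts (which is merely that the \emph{given} KKT point $x^*$ has the form $q+V\alpha$), but the extra content is correct and harmless.
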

Now let $X=\lbrace x^*~|~ x^* = q + V\alpha,~||x^*||_B^2 =\delta_2^2 \rbrace$   be the set  of optimal solutions of (\ref{trsAB}), where  $\alpha \in \mathbb{R}^r$.
To see whether there exists an optimal solution of \TRS (4) that belongs to $E_1$, it is sufficient to solve the following problem:
\begin{align}\label{trsDB}
\min \quad &||q + V \alpha||^2 \notag \\
&\hspace{-.7cm}{\rm s.t.}\ (q + V \alpha-c)^TB(q + V \alpha-c) = \delta_2^2.
\end{align}
This  is equivalent to the  following problem:
\begin{align*}
\min \quad &\alpha^T  \left[ V^TV \right]\alpha + 2q^T V \alpha  \notag \\
&\hspace{-.7cm}{\rm s.t.}\ \alpha^T \alpha = \delta_2^2 - (q-c)^TB(q-c).
\end{align*}
Let $\alpha^*$ be its optimal solution, and set $x^* =q + V \alpha^*$. If $||x^*||^2 \leq \delta_1^2 $, then it is  an optimal solution of \TTRSp. The same procedure should be performed for \TRS (1).

After the above discussion, if none of the optimal solutions of \TRS (1) and (4) are feasible for \TTRSp, we move to the steps of ADMM steps.
One can write TTRS in the following equivalent form:
\begin{align}\label{TTRSLR}
\min \quad &\frac12x^TAx +a^Tx \nonumber \\
&||z||^2 \leq \delta_1^2 ,\\
&(x-c)^TB(x-c) \leq \delta_2^2 , \nonumber \\
&x=z.\nonumber
\end{align}
To define the ADMM steps,
consider the following {\it augmented  Lagrangian} for (\ref{TTRSLR}):
\[L(x,z,\lambda)=\frac12x^TAx +a^Tx+\lambda^T(x-z)+\frac{\rho}{2} ||x-z||^2,\]
where $\lambda_i$'s are Lagrange multipliers and $\rho>0$ is the penalty parameter. The ADMM steps for the given $x^k$ and $\lambda^k$ are as follow
 \cite{Boydadmm:10}:
\begin{itemize}
\item {\bf Step 1:} $z^{k+1}={{\rm{argmin}}_{||z||^2 \leq \delta_1^2}} L(x^{k},z,\lambda^k).$
\item{\bf Step 2:}  $x^{k+1}={\rm{argmin}}_{(x-c)^TB(x-c) \leq \delta_2^2 } L(x,z^{k+1},\lambda^k).$
\item {\bf Step 3:} $\lambda^{k+1}=\lambda^k+\tau\rho(x^{k+1}-z^{k+1}),$ where $\tau \in (0,1)$ is a constant.
\end{itemize}
In Step 1, we   solve the following \TRSp:
 \begin{eqnarray}\label{step1}
\min &  L(x^{k},z,\lambda^k) \nonumber\\
        &\hspace{-0.6cm}{\rm s.t.}\  ~~ ||z||^2\leq \delta_1^2.
\end{eqnarray}
Let $z^{k+1}$ be the optimal solution of (\ref{step1}). In Step 2, we  solve the following \TRSp:
\begin{eqnarray}\label{step2}
\min &  L(x,z^{k+1},\lambda^k) \nonumber\\
        &\hspace{-0.6cm}{\rm s.t.}\  ~~~~~~(x-c)^TB(x-c)\leq \delta_2^2.
\end{eqnarray}
As we see, in each step, we  need to solve a  \TRS for which  efficient algorithms  are available  \cite{Adachi:15,con}.\\
\\
---------------------------------------------------------------------------------------------------\\
{\bf {Hybrid  algorithm}}\\
--------------------------------------------------------------------------------------------------- \\
{\bf Step 0-1:} Check the feasibility of  \TTRS by solving (\ref{trsD}). If $v_{ch}^* >0$ then  \TTRS is infeasible, { exit};
{else} go to Step 2.   \\
{\bf Step 0-2:} Solve both \TRS   (\ref{trsAD})  and (\ref{trsAB}).
   If $x_1^* \in E$  or $x_2^* \in E$, then exit with  the global solution of  \TTRSp;
   { else} go to Step 3.    \\
{\bf Step 0-3:} Compute the  LNGM of (\ref{trsAD}) and (\ref{trsAB})  if they exist. Keep them if they are feasible for  \TTRSp. \\
 {\bf ADMM steps:}\\
 {\bf Input parameters:} $tol>0$,  maxiter$>0$. Choose appropriate penalty parameter  $\rho>0$ and $\tau>0$. Set $k=0$ and choose appropriate
$x^k$ and $\lambda^k$\\
{\bf For} $k=1,\cdots,$ maxiter {\bf do}\\
Solve   \TRS    (\ref{step1}) and let $z^{k+1}$ be its optimal solution. \\
Solve   \TRS   (\ref{step2}) and let $x^{k+1}$ be its optimal solution.\\
 {\bf If} $||x^{k+1}-z^{k+1}||\le tol$, then exit with $x^{k+1}$ as output.\\
 {\bf end if}\\
 Set  $\lambda^{k+1}=\lambda^k+\tau\rho(x^{k+1}-z^{k+1})$ and $k=k+1.$\\
 {\bf end for}.\\
  Choose   $x^*$ as the best of ADMM steps and   LNGM of  (\ref{trsAB}) and (\ref{trsAD}) if they exist.  \\
 -----------------------------------------------------------------------------------------------------
 \\
In what follows, if the global minimum of  \TTRS is not the global or \LNGM of TRS (1) or (4),  we discuss the convergence of the  ADMM steps
to the stationary point of  \TTRSp. First we present the following lemma.
\begin{lemma}
Suppose that $\{\lambda^k\}$ is bounded and  $\sum_{k=1}^{\infty} ||\lambda^{k+1}-\lambda^k||^2<\infty.$ Then
\[||x^{k+1}-x^k||\rightarrow 0,\ \ ||z^{k+1}-z^k||\rightarrow 0\ \ {\rm as}\ \ k\rightarrow \infty .\]
\end{lemma}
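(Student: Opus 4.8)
The plan is to treat the augmented Lagrangian $L(x,z,\lambda)$ as a Lyapunov function along the ADMM iterates and to read off the claimed vanishing of successive differences from the resulting telescoping estimate combined with the two hypotheses. As a preliminary observation, Step~3 gives $\lambda^{k+1}-\lambda^k=\tau\rho(x^{k+1}-z^{k+1})$, so $\|x^{k+1}-z^{k+1}\|=\tfrac{1}{\tau\rho}\|\lambda^{k+1}-\lambda^k\|$; hence the hypothesis $\sum_k\|\lambda^{k+1}-\lambda^k\|^2<\infty$ already yields $\sum_k\|x^{k+1}-z^{k+1}\|^2<\infty$ and in particular $\|x^k-z^k\|\to0$ as $k\to\infty$.

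The core step is the standard three–term splitting
\[L(x^k,z^k,\lambda^k)-L(x^{k+1},z^{k+1},\lambda^{k+1})=D_1+D_2+D_3,\]
with $D_1=L(x^k,z^k,\lambda^k)-L(x^k,z^{k+1},\lambda^k)$, $D_2=L(x^k,z^{k+1},\lambda^k)-L(x^{k+1},z^{k+1},\lambda^k)$, and $D_3=L(x^{k+1},z^{k+1},\lambda^k)-L(x^{k+1},z^{k+1},\lambda^{k+1})$. For $D_1$: as a function of $z$, $L(x^k,z,\lambda^k)$ has Hessian $\rho I$, hence is $\rho$-strongly convex; since $z^{k+1}$ minimizes it over the convex set $\{\|z\|^2\le\delta_1^2\}$ and $z^k$ lies in that set, the first-order optimality inequality gives $D_1\ge\tfrac{\rho}{2}\|z^{k+1}-z^k\|^2$. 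For $D_2$: since $x^{k+1}$ minimizes $L(\cdot,z^{k+1},\lambda^k)$ over $\{(x-c)^TB(x-c)\le\delta_2^2\}$ and $x^k$ is feasible there, simply $D_2\ge0$ (and, if in addition $\rho>-\lambda_1$, the Hessian $A+\rho I$ is positive definite and one even gets $D_2\ge\tfrac{\rho+\lambda_1}{2}\|x^{k+1}-x^k\|^2$). For $D_3$: only the term $\lambda^T(x-z)$ of $L$ depends on $\lambda$, so $D_3=(\lambda^k-\lambda^{k+1})^T(x^{k+1}-z^{k+1})=-\tfrac{1}{\tau\rho}\|\lambda^{k+1}-\lambda^k\|^2$.

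Combining and telescoping over $k=1,\dots,N$,
\[\frac{\rho}{2}\sum_{k=1}^N\|z^{k+1}-z^k\|^2\le L(x^1,z^1,\lambda^1)-L(x^{N+1},z^{N+1},\lambda^{N+1})+\frac{1}{\tau\rho}\sum_{k=1}^N\|\lambda^{k+1}-\lambda^k\|^2.\]
Since the iterates $x^k$ stay in the compact ellipsoid $E_2$, the $z^k$ in the compact ball $\{\|z\|\le\delta_1\}$, and $\{\lambda^k\}$ is bounded by hypothesis, $L$ is bounded below along the iterates, say by $\underline L$; with the summability hypothesis the right-hand side is then bounded above uniformly in $N$, so $\sum_k\|z^{k+1}-z^k\|^2<\infty$ and therefore $\|z^{k+1}-z^k\|\to0$. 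Finally, using the triangle inequality together with the preliminary observation,
\[\|x^{k+1}-x^k\|\le\|x^{k+1}-z^{k+1}\|+\|z^{k+1}-z^k\|+\|z^k-x^k\|\longrightarrow0,\]
which completes the proof. (Alternatively, under the extra choice $\rho>-\lambda_1$ the term $D_2$ supplies $\tfrac{\rho+\lambda_1}{2}\|x^{k+1}-x^k\|^2$ and $\|x^{k+1}-x^k\|\to0$ follows directly from the same telescoping sum.)

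I expect the only genuinely delicate point inside this argument to be establishing the uniform lower bound $\underline L$ on $L$ along the iterates — but this is routine given that both primal variables vary over compact sets and $\{\lambda^k\}$ is assumed bounded, so that $L$ is the sum of a continuous function on a compact set and the nonnegative term $\tfrac{\rho}{2}\|x-z\|^2$. The truly substantive assumptions (boundedness of $\{\lambda^k\}$ and summability of $\|\lambda^{k+1}-\lambda^k\|^2$) are hypotheses of the lemma here and would have to be justified separately, presumably together with a lower bound on $\rho$ that makes Step~2 a strictly convex subproblem.
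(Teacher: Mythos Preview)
Your proof is correct and follows essentially the same Lyapunov/telescoping scheme as the paper: both use the three-term splitting of $L(x^k,z^k,\lambda^k)-L(x^{k+1},z^{k+1},\lambda^{k+1})$, the identity $D_3=-\tfrac{1}{\tau\rho}\|\lambda^{k+1}-\lambda^k\|^2$, boundedness of $L$ along the iterates, and a concluding triangle-inequality argument.

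The one genuine difference is \emph{which} subproblem supplies the quadratic lower bound. The paper uses the $x$-update: from the first-order optimality inequality at $x^{k+1}$ and a quadratic Taylor expansion it gets $D_2\ge\tfrac{\lambda_1+\rho}{2}\|x^{k+1}-x^k\|^2$, deduces $\|x^{k+1}-x^k\|\to0$ first, and then obtains $\|z^{k+1}-z^k\|\to0$ via the triangle inequality. You instead exploit the $z$-update, whose Hessian is $\rho I$, to obtain $D_1\ge\tfrac{\rho}{2}\|z^{k+1}-z^k\|^2$, deduce $\|z^{k+1}-z^k\|\to0$ first, and recover $\|x^{k+1}-x^k\|\to0$ by the triangle inequality. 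Your route is marginally cleaner: it needs only $\rho>0$, whereas the paper's bound on $D_2$ is useful only when $\rho+\lambda_1>0$, an assumption the paper leaves implicit (it is satisfied by the choices of $\rho$ used in the numerical section, but is not stated in the lemma). Your parenthetical ``alternatively, under $\rho>-\lambda_1$'' is exactly the paper's argument.
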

\begin{proof} Since $x^{k+1}$ solves problem (\ref{step2}) at $k$-th
iteration and $x^k-x^{k+1}$ is a feasible direction with respect to
the feasible region of (\ref{step2}), then
\begin{eqnarray}\label{th1eq1}\nabla_xL(x^{k+1},z^{k+1},\lambda^k)^T(x^k-x^{k+1})\ge
0.\end{eqnarray} Moreover,
\begin{eqnarray}&&L(x^k,z^{k+1},\lambda^k)-L(x^{k+1},z^{k+1},\lambda^k)= \frac12(x^k-x^{k+1})^T(A+\rho I)(x^k-x^{k+1})\nonumber\\
&&+\nabla_xL(x^{k+1},z^{k+1},\lambda^k)^T(x^k-x^{k+1})\ge
\frac{\lambda_{1}+\rho}{2}||x^k-x^{k+1}||^2,\end{eqnarray}
where the
inequality follows from the definition of the smallest eigenvalue of
$A$, $\lambda_1$,  and (\ref{th1eq1}). We also have
\begin{eqnarray}\label{th1eq2}L(x^k,z^{k},\lambda^k)-L(x^{k},z^{k+1},\lambda^k)\ge
0,\end{eqnarray} as $z^{k+1}$ is the minimizer of
$L(x^k,z,\lambda^k)$. On the other hand
\begin{eqnarray}\label{th1eq3}L(x^{k+1},z^{k+1},\lambda^k)-L(x^{k+1},z^{k+1},\lambda^{k+1})&& = (\lambda^k-\lambda^{k+1})^T(x^{k+1}-z^{k+1})\nonumber\\ && =-\frac{1}{\tau\rho}||\lambda^k-\lambda^{k+1}||^2.\end{eqnarray}
Now using (\ref{th1eq1}), (\ref{th1eq2}), and (\ref{th1eq3}) we have
\begin{eqnarray}\label{lag1}L(x^k,z^{k},\lambda^k)&&-L(x^{k+1},z^{k+1},\lambda^{k+1})=L(x^k,z^{k},\lambda^k)-L(x^{k},z^{k+1},\lambda^{k})\nonumber\\
&& \hspace{-2.5cm}+L(x^k,z^{k+1},\lambda^k)-L(x^{k+1},z^{k+1},\lambda^{k})+L(x^{k+1},z^{k+1},\lambda^k)-L(x^{k+1},z^{k+1},\lambda^{k+1})\nonumber\\
&& \ge
\frac{\lambda_1+\rho}{2}||x^{k+1}-x^k||^2-\frac{1}{\tau\rho}||\lambda^{k+1}-\lambda^k||^2.\end{eqnarray}
Since $\{\lambda^k\}$ and $\{x^k\}$ are bounded,  from Step 3 of
ADMM iterations,  $\{z^k\}$ is also  bounded. Thus
$\{L(x^k,z^k,\lambda^k)\}$ is bounded. Moreover, since by assumption
$\sum_{k=1}^{\infty}||\lambda^{k+1}-\lambda^k||^2<\infty,$ then from
(\ref{lag1}), $\sum_{k=1}^{\infty}||x^{k+1}-x^k||^2$ is a bounded
series (in the sense that the sequence of partial sums is
bounded) with nonnegative terms, thus it is convergent.
Therefore $||x^k-x^{k+1}||\rightarrow 0,\ \ {\rm as}\ \ k\rightarrow
\infty.$ Moreover since by assumption
$||\lambda^k-\lambda^{k+1}||\rightarrow 0,\ \ {\rm as}\ k\rightarrow
\infty,$ from the Step 3 we have $x^k-z^k\rightarrow 0,\ \ {\rm as}\
k\rightarrow \infty.$ Finally since
\[z^k-z^{k+1}=z^k-x^k+x^k-x^{k+1}+x^{k+1}-z^{k+1},\] and
we know $z^k-x^k\rightarrow 0,\  x^k-x^{k+1}\rightarrow 0,\
x^{k+1}-z^{k+1}\rightarrow 0,\ \ {\rm as}\ k\rightarrow\infty,$ then
$z^k-z^{k+1}\rightarrow 0$.\qed
\end{proof}
In what follows, the convergence of algorithm to the first-order stationary conditions is proved.

\begin{theorem}
Let $(x^*,z^*,\lambda^*)$ be any accumulation point of $\{(x^k,z^k,\lambda^k)\}$ generated by the  ADMM steps. Then by
boundedness of $\{\lambda^k\}$ and $\sum_{k=1}^{\infty} ||\lambda^{k+1}-\lambda^k||^2<\infty,$ $x^*$ satisfies the first-order stationary conditions.
\end{theorem}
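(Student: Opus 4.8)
The plan is to pass to the limit, along a suitable subsequence, in the first-order (KKT) optimality conditions of the two trust-region subproblems (\ref{step1}) and (\ref{step2}) solved at Steps 1 and 2, and show that what survives is exactly the KKT system (\ref{KKTTT1}) for \TTRSp. First I would fix a subsequence $\{k_j\}$ with $(x^{k_j},z^{k_j},\lambda^{k_j})\to(x^*,z^*,\lambda^*)$; note that for $k\ge 1$ each $x^k$ lies in the bounded ellipsoid $E_2$ and each $z^k$ in the bounded ball $E_1$, so $\{x^k\}$ and $\{z^k\}$ are automatically bounded. By the preceding Lemma, $\|x^{k+1}-x^k\|\to 0$ and $\|z^{k+1}-z^k\|\to 0$, hence $x^{k_j+1}\to x^*$ and $z^{k_j+1}\to z^*$; and since $\sum_k\|\lambda^{k+1}-\lambda^k\|^2<\infty$ forces $\|\lambda^{k+1}-\lambda^k\|\to 0$, also $\lambda^{k_j+1}\to\lambda^*$. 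The Lemma also gives $x^k-z^k\to 0$, so $x^*=z^*$, and Step 3 gives $\rho(x^{k+1}-z^{k+1})=\tfrac1\tau(\lambda^{k+1}-\lambda^k)\to 0$.

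Next I would record the KKT conditions of the subproblems. Problem (\ref{step1}) is a strictly convex quadratic (Hessian $\rho I$) over the ball $\|z\|^2\le\delta_1^2$, so there is $\gamma^{k+1}\ge 0$ with $2\gamma^{k+1}z^{k+1}=\lambda^k+\rho(x^k-z^{k+1})$, complementarity $\gamma^{k+1}(\|z^{k+1}\|^2-\delta_1^2)=0$, and $\|z^{k+1}\|^2\le\delta_1^2$. Since $x^{k+1}$ is a global minimizer of the TRS (\ref{step2}), the first-order necessary conditions yield $\mu^{k+1}\ge 0$ with $Ax^{k+1}+a+\lambda^k+\rho(x^{k+1}-z^{k+1})+2\mu^{k+1}B(x^{k+1}-c)=0$, complementarity $\mu^{k+1}\bigl((x^{k+1}-c)^TB(x^{k+1}-c)-\delta_2^2\bigr)=0$, and feasibility.

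The technical heart is the boundedness of $\{\gamma^{k+1}\}$ and $\{\mu^{k+1}\}$ (assume $\delta_1,\delta_2>0$, the problem being trivial otherwise). In the $z$-equation the right-hand side $\lambda^k+\rho(x^k-z^{k+1})$ is bounded, because $\{\lambda^k\}$ is bounded and $\rho(x^k-z^{k+1})=\rho(x^k-x^{k+1})+\rho(x^{k+1}-z^{k+1})\to 0$; whenever the ball constraint is active we have $\|z^{k+1}\|=\delta_1>0$, so $\gamma^{k+1}$ is bounded by a multiple of that bounded quantity, and whenever it is inactive $\gamma^{k+1}=0$ by complementarity. Similarly, in the $x$-equation the vector $2\mu^{k+1}B(x^{k+1}-c)$ equals a bounded vector; whenever the ellipsoid $E_2$ is active, $(x^{k+1}-c)^TB(x^{k+1}-c)=\delta_2^2$ forces $\|B(x^{k+1}-c)\|\ge\lambda_{\min}(B)\|x^{k+1}-c\|\ge\lambda_{\min}(B)\,\delta_2/\sqrt{\lambda_{\max}(B)}>0$, so $\mu^{k+1}$ is bounded, and when inactive $\mu^{k+1}=0$. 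Passing to a further subsequence, $\gamma^{k_j+1}\to\gamma^*\ge 0$ and $\mu^{k_j+1}\to\mu^*\ge 0$. This boundedness argument is the only genuinely delicate point.

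Finally I would take the limit along $\{k_j\}$. The $z$-equation gives $\lambda^*=2\gamma^*z^*=2\gamma^*x^*$ (using $z^*=x^*$); substituting into the limit $Ax^*+a+\lambda^*+2\mu^*B(x^*-c)=0$ of the $x$-equation yields $Ax^*+a+2\gamma^*x^*+2\mu^*B(x^*-c)=0$, which, identifying $\gamma^*,\mu^*$ (up to the harmless factor $2$) with the multipliers in (\ref{KKTTT1}), is precisely $H(\gamma^*,\mu^*)x^*=-a+\mu^*Bc$. The two complementarity relations pass to the limit using $z^*=x^*$, while feasibility $\|x^*\|^2\le\delta_1^2$, $(x^*-c)^TB(x^*-c)\le\delta_2^2$ and nonnegativity $\gamma^*,\mu^*\ge 0$ survive the limit. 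Hence $(x^*,\gamma^*,\mu^*)$ satisfies the KKT system (\ref{KKTTT1}), i.e.\ $x^*$ satisfies the first-order stationary conditions for \TTRSp. \qed
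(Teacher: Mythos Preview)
Your argument is correct and follows essentially the same route as the paper: write the KKT conditions for the two subproblems (\ref{step1}) and (\ref{step2}), pass to the limit along the convergent subsequence, use $x^*=z^*$ (from the preceding Lemma) to eliminate $\lambda^*$ between the two stationarity equations, and read off (\ref{KKTTT1}). The one substantive addition in your proposal is the explicit boundedness argument for the multiplier sequences $\{\gamma^{k+1}\}$ and $\{\mu^{k+1}\}$; the paper simply writes ``by taking the limit'' without justifying that the multipliers converge along a further subsequence, so your version actually closes a small gap in the published proof rather than deviating from it.
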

\begin{proof}
Since $(x^*,z^*,\lambda^*)$ is an accumulation point of $\{(x^k,z^k,\lambda^k)\}$, then there exists a subsequence $\{(x^k,z^k,\lambda^k)\}_{k\in I}$
that converges to $(x^*,z^*,\lambda^*)$. Now consider subproblems  that should be solved in Steps 1 and 2 of ADMM iterations.
 Subproblem (\ref{step1}) in Step 1 is  a convex quadratic optimization problem which its necessary and sufficient optimality conditions are  as follow:
\begin{eqnarray}\label{trs2}
&& \rho z^{k+1}-(\lambda^k+\rho x^{k})+  \gamma^{k+1} z^{k+1} = 0,\nonumber\\
&&  ||z^{k+1}||^2 \leq \delta_1^2, \\
&& \gamma^{k+1} \left[ ( ||z^{k+1}||^2 - \delta_1^2 \right]=0, ~~\gamma^{k+1} \geq 0, \nonumber\\
&& \rho I_n +  \gamma^{k+1} I_n\succeq 0, \nonumber
\end{eqnarray}
where $\gamma^{k+1}$  is the Lagrange multiplier.  Moreover, subproblem in Step 2 is a \TRS with  the following necessary and sufficient optimality conditions:
\begin{eqnarray}\label{trs1}
&& (A+\rho I_n )x^{k+1}+ a+\lambda^k - \rho z^{k+1}+ \mu^{k+1} B (x^{k+1} - c) = 0,\nonumber\\
&&  ( x^{k+1}- c)^T B ( x^{k+1}- c) \leq \delta_2^2, \nonumber \\
&& \mu^{k+1}\left[ ( x^{k+1}- c)^T B ( x^{k+1}- c)-  \delta_2^2 \right] =0,~~\mu^{k+1} \geq 0,\\
&& A+\rho I_n+\mu^{k+1}B\succeq 0_{n\times n}.\nonumber
\end{eqnarray}
Now by taking the limit of both (\ref{trs2}) and (\ref{trs1}), we
get
\begin{eqnarray}\label{trs11}
&& \rho z^{*}-(\lambda^*+\rho x^{*})+   \gamma^{*} z^{*} = 0, \label{KKT25}\\
&&  ||z^*||^2 \leq \delta_1^2,\label{KKT26} \\
&& \gamma^{*} \left[ ||z^*||^2 -  \delta_1^2 \right]=0, ~~\gamma^{*} \geq 0, \label{KKT27}\\
&& (A+\rho I_n )x^{*}+  a+\lambda^* - \rho z^{*}+   \mu^{*} B (x^{*} - c) = 0,\label{KKT28}\\
&&  ( x^{*}- c)^T B ( x^{*}- c) \leq \delta_2^2,\label{KKT29} \\
&& \mu^{*}\left[ ( x^{*}- c)^T B ( x^{*}- c)-  \delta_2^2 \right] =0,~~\mu^{*} \geq 0.\label{KKT30}
\end{eqnarray}
From  (\ref{KKT25}) and  (\ref{KKT29}), we get
$$Ax^{*}+ a +   \gamma^{*} x^{*} +    \mu^{*} B (x^{*} - c)  = 0,$$
which  with  (\ref{KKT26}), (\ref{KKT27})   and  (\ref{KKT29}), (\ref{KKT30})   are the first-order stationary conditions.\qed
\end{proof}

\section{Numerical experiments}
In this section, we present several classes of test problems to assess the performance of Hybrid algorithm for solving \TTRSp.
For small dimension problems, we compare { Hybrid} algorithm with the  SDP
relaxation  of \TTRS and     by Sakaue et. al's algorithm \cite{SakaueNakat:16}.
For large-scale problems, we do comparison with Snopt   through Tomlab as the software giving   best results.
All computations are performed  in MATLAB R2015a on a 2.50 GHz laptop with 8 GB of RAM. To solve the SDP reformulation, we have used CVX  1.2.1. For all test  problems, we set $tol=10^{-7}$ and ${{\rm maxiter}}=1000$.   To solve the  TRSs  within the algorithm we have used the algorithm in \cite{Adachi:15} and to find the \LNGM of \TRS we have used the algorithm of \cite{SalahiTaatiw:20}.
Finally, we should note that results in tables are the average of 100 runs for each dimension.

 Our  numerical experiments led to the following starting point procedure for  Hybrid algorithm.
 By considering $\beta_1, \beta_2 \in \mathbb{R}_+$, the parametric form of the \TTRS   is constructed as follows:
\begin{align}\label{mrs123}
\min \quad \frac12x^TAx +a^Tx &+\beta_1 \left( (x-c)^TB(x-c) - \delta_2^2 \right)  \notag\\
&||x||^2 \leq \delta_1^2,
\end{align}
\begin{align}\label{mrs124}
\min \quad &\frac12x^TAx +a^Tx +\beta_2 \left( ||x||^2 - \delta_1^2 \right)  \notag \\
&(x-c)^TB(x-c) \leq \delta_2^2.
\end{align}
Let $x^{*}_{\beta_1}$ and $x^{*}_{\beta_2}$ be optimal solutions of  (\ref{mrs123}) and (\ref{mrs124}), respectively.
For large enough $\bar{\beta}_1$ and $\bar{\beta}_2$, these solutions  are feasible for \TTRSp.
Now, consider the following starting point for   {Hybrid} algorithm:
\begin{align}\label{intialpoint}
x_0 = \operatornamewithlimits{\textsf{argmin}}_{x \in  \lbrace x^{*}_{\bar{\beta}_1},x^{*}_{\bar{\beta}_2}\rbrace}
 \lbrace \frac12  x^TAx +a^Tx\rbrace.
\end{align}
\begin{table}
\centering
\small
\begin{tabular}{c|cccccccc}
\hline
Notation&    Description        \\
\hline
n            &    Dimension of problem   \\
Den       &Density of $A$ and $B$    \\
CPU        &Run time         \\
KKT         &   $||(A+\gamma^*I+ \mu^*B)x^*+ a - \mu^*Bc|| $  \\
L(\ref{trsAD})            &  Number of times that LNGM of problem  (\ref{trsAD}) is feasible for TTRS    \\
 L(\ref{trsAB})           & Number of times that LNGM of problem (\ref{trsAB})  is feasible for TTRS    \\
Opt-2active        &  Number of times that both constraints of TTRS are active at optimality   \\
    Opt-L (\ref{trsAD})      & Number of times that  LNGM  of  \TRS  (\ref{trsAD})  is optimal\\
Opt-L  (\ref{trsAB})      &Number of times that  LNGM  of   \TRS (\ref{trsAB})   is optimal  \\
Obj         &$\frac{1}{2}(x^*)^TAx^*+  ax^*$    \\
{$F_{Hy}$} &Objective value of Hybrid algorithm    \\
$F_{Sn}$ &Objective value of  Snopt solver of Tomlab   \\
Cv-Snopt   &Number of times  the  feasibility of constraints is violated by Snopt   \\
\hline
\end{tabular}
\caption{ \scriptsize  Notations in the tables}
\label{notationR}
\end{table}
\begin{itemize}
\item {\bf  First class of test problems:}\\
Here we consider two small dimensional  examples.\\
 {\bf Example 1:} Consider the following problem  which is taken   from \cite{Buran:13}:
\begin{align*}
\min ~~& x^T \begin{bmatrix}
  -4  ~~  &  1\\
  1   ~~  & -2
\end{bmatrix}
x+\begin{bmatrix}
   1\\
  1
\end{bmatrix}^Tx \\
s.t.~~~&||x||_2^2\leq 1,~~
x^T \begin{bmatrix}
  3 ~~  &  0\\
  0   ~~  & 1
\end{bmatrix}
x\leq 2.
\end{align*}
The optimal solutions are $x^* = (\pm 1 , \mp 1)/ \sqrt{2} $ with the  objective value $-4$.
The SDP relaxation   gives $-4.25$ and
by applying the approach of \cite{BurerYang:15}, one gets objective value $−4.0360$.
Our algorithm gives  $( \pm 0.70711, \mp 0.70711)$  with
objective value $-4.0000$, from any starting point. Both LNGM and global solutions of TRS  (\ref{trsAD})  and (\ref{trsAB}) are
 infeasible for TTRS.
\\
\\
 {\bf Example 2:}
 Consider the following problem where the two ellipsoids intersect at four points as shown   in Figure \ref{figexa2}:
  \begin{align*}
\min ~~& x^T \begin{bmatrix}
  -4  ~~  &  1\\
  1   ~~  & -2
\end{bmatrix}
x+\begin{bmatrix}
   1\\
  1
\end{bmatrix}^Tx \\
s.t.~~~&||x||_2^2\leq 1,~~
x^T \begin{bmatrix}
  \frac{9}{4} ~~  &  0\\
  0   ~~  & \frac{1}{4}
\end{bmatrix}
x\leq 1.
\end{align*}
Both \TRS \ (1) and (4) have no \LNGM and their global optimal solutions are not feasible for \TTRSp. Thus the optimal solution of \TTRS  is  on the intersection of two constraints i.e.,  one of the four points. The global solution   is $x^* = (\sqrt{3}, -\sqrt{5})/\sqrt{8} $ with
objective value $-3.8964$ and  two local solutions   are  $ (-\sqrt{3}, \pm\sqrt{5})/\sqrt{8}$  
where the Hessian of Lagrangian has one negative eigenvalue. Also, $ (\sqrt{3}, \sqrt{5})/\sqrt{8}$  is the third local solution where Hessian of Lagrangian has two negative eigenvalues. Hybrid algorithm can   converge to any of these points,  depending  on the starting point. However, if we consider $(0.4054,  -0.9141)$ as starting point  from (\ref{intialpoint}),  Hybrid algorithm converges to $x^*$.
\begin{figure}[h]
\centering
\includegraphics[width=10cm]{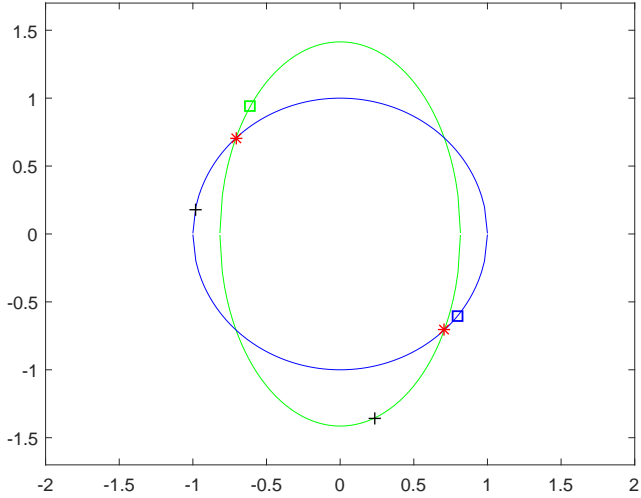}
\caption{\label{figexa1}\textsl{ \scriptsize
   $ +$: global solution of  (\ref{trsAD})  and   (\ref{trsAB}), $\Box$: local non-global solution of (\ref{trsAD})  and   (\ref{trsAB}).
   $\ast$: global solution of \TTRSp. The figure for Example 1 of  First class.
}}
\end{figure}
\begin{figure}[h]
\centering
\includegraphics[width=10cm]{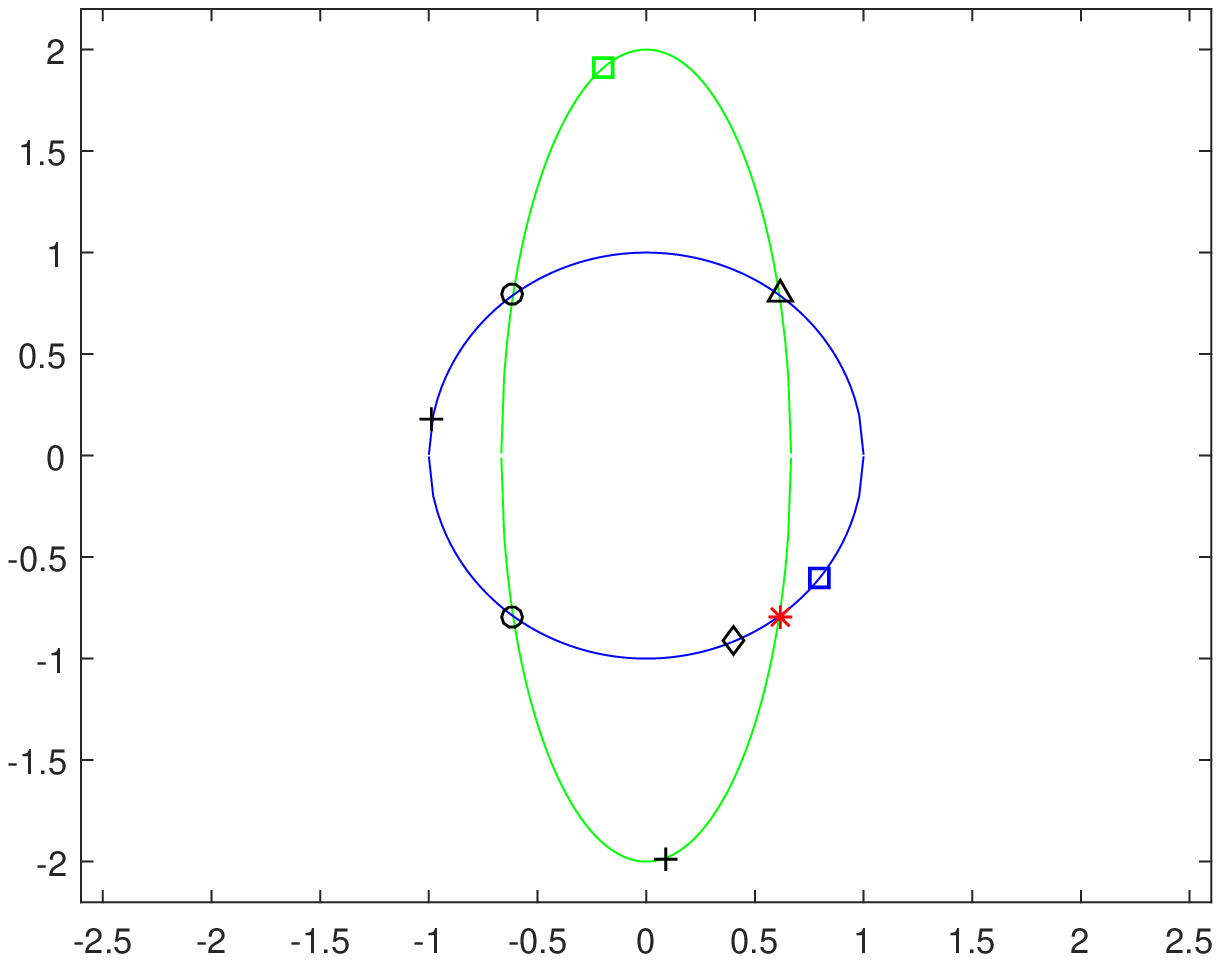}
\caption{\label{figexa2}\textsl{ \scriptsize
    $ +$: global solution of (\ref{trsAD})  and   (\ref{trsAB}), $\Box$: local non-global solution of (\ref{trsAD})  and   (\ref{trsAB}).
   $\ast$:  global  solution of \TTRSp.  $\bigcirc$: local  solution with one negative eigenvalue at Hessian of Lagrangian.
     $\bigtriangleup$: local  solution with two negative eigenvalue at Hessian of Lagrangian.
    $\Diamond$ : Starting point for  {Hybrid} algorithm (given by  (\ref{intialpoint})).
}}
\end{figure}
\\
\item {\bf Second class of test problems}\\
This class of test problems is generated using the following lemma \cite{SalahiTaatiw:20}.
\begin{lemma}\label{lemLNG}
Let  $A \in S^n$ and suppose that $\lambda_1 < \min \lbrace  0, \lambda_2  \rbrace$. Then there exists a linear term $a$ for which the eigenvector $v_1$ associated with $\lambda_1$ is the \LNGM of (\ref{trsAD}). Moreover $-v_1$ is the global  solution of  (\ref{trsAD}).
\end{lemma}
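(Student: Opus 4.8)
The plan is to construct $a$ explicitly so that the TRS optimality conditions at $v_1$ (for a LNGM) and at $-v_1$ (for the global minimizer) hold simultaneously. Working in the eigenbasis $A = Q\Lambda Q^T$, I would write $a = Q b$ and pick $b$ supported only on coordinates corresponding to $\lambda_1$ and one other eigenvalue, so the necessary conditions of Theorem \ref{nclng1} and Theorem \ref{lng1} can be checked coordinate-wise via the formulas in (\ref{flng6}). Concretely, scale so that $\delta_1 = 1$; then I want $v_1$ (a unit eigenvector for $\lambda_1$) to satisfy $(A + \lambda^* I)v_1 = -a$ for some $\lambda^* \in (\max\{0,-\lambda_2\}, -\lambda_1)$. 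Since $(A+\lambda^* I)v_1 = (\lambda_1 + \lambda^*) v_1$, this forces $a = -(\lambda_1+\lambda^*) v_1$, i.e. $a$ is a negative multiple of $v_1$ (note $\lambda_1 + \lambda^* < 0$, so $a = |\lambda_1+\lambda^*|\,v_1$ up to sign; choose the sign/multiple to be fixed later).

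Next I would verify the two curvature/second-order conditions. For the LNGM claim at $v_1$: choosing $V$ whose columns span $v_1^\perp$, the restricted Hessian $V^T(A+\lambda^* I)V$ has eigenvalues $\lambda_i + \lambda^*$ for $i \ge 2$ (since the $q_i$ are orthogonal to $v_1$), and these are all $\ge 0$ precisely because $\lambda^* > -\lambda_2 \ge -\lambda_i$ for $i\ge 2$ — here the hypothesis $\lambda_1 < \lambda_2$ is what lets us find such a $\lambda^*$ in the open interval $(\max\{0,-\lambda_2\},-\lambda_1)$, which is nonempty exactly when $\lambda_1 < -\max\{0,-\lambda_2\}$, i.e. when $\lambda_1 < \min\{0,\lambda_2\}$ — the stated assumption. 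The norm condition $\|v_1\|^2 = \delta_1^2 = 1$ holds by normalization. Finally I must check the sign condition $\phi'(\lambda^*)\ge 0$ from Theorem \ref{lng1}; with $a$ a multiple of $v_1$ only, $\phi(\lambda) = (q_1^Ta)^2/(\lambda_1+\lambda)^2$ and $\phi'(\lambda) = -2(q_1^Ta)^2/(\lambda_1+\lambda)^3$, which is \emph{positive} since $\lambda_1 + \lambda^* < 0$ makes the denominator negative. So $v_1$ genuinely meets all necessary conditions for a LNGM, and since by Theorem \ref{lng1}(2) there is at most one LNGM and Mart\'inez's theory guarantees existence of one here (the global minimizer lies elsewhere), $v_1$ is \emph{the} LNGM.

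For the global minimizer claim at $-v_1$: I would invoke the standard global TRS optimality conditions — $x$ is global iff $(A+\lambda I)x = -a$, $\|x\| = \delta_1$, $\lambda \ge \max\{0,-\lambda_1\} = -\lambda_1$, and $A + \lambda I \succeq 0$. With $x = -v_1$ I need $(A+\lambda I)(-v_1) = -(\lambda_1+\lambda)v_1 = -a = -|\lambda_1+\lambda^*|v_1$ (matching the same $a$), which determines $\lambda$ via $\lambda_1 + \lambda = |\lambda_1+\lambda^*| = -(\lambda_1+\lambda^*) > 0$, hence $\lambda = -2\lambda_1 - \lambda^* > -\lambda_1 \ge 0$; and then $A + \lambda I \succeq 0$ because its smallest eigenvalue is $\lambda_1 + \lambda > 0$. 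Thus $-v_1$ satisfies the sufficient global optimality conditions. The one subtlety — the main obstacle — is consistency: a \emph{single} vector $a$ must simultaneously play the role of the right-hand side for both points, and the above shows this is automatic once $a$ is taken parallel to $v_1$ with the magnitude $|\lambda_1 + \lambda^*|$ for any admissible $\lambda^* \in (\max\{0,-\lambda_2\}, -\lambda_1)$; I would present the construction by first fixing such a $\lambda^*$, then setting $a := -(\lambda_1 + \lambda^*)q_1$, and checking both sets of conditions as above. A minor point to address is that $q_1^T a \neq 0$ (so Corollary \ref{colng1} does not obstruct the LNGM), which holds since $\lambda_1 + \lambda^* \neq 0$.
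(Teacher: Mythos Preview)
Your proposal is correct and follows essentially the same construction as the paper: fix a multiplier $\lambda^*$ (the paper's $\mu_0$) in $(\max\{0,-\lambda_2\},-\lambda_1)$, set $a=-(A+\lambda^*I)v_1=-(\lambda_1+\lambda^*)v_1$, and then verify that $-v_1$ is the global minimizer with multiplier $-2\lambda_1-\lambda^*$. You actually supply more detail than the paper (which defers the LNGM verification to \cite{SalahiTaatiw:20}); the only slip is the phrase ``a negative multiple of $v_1$'' --- since $\lambda_1+\lambda^*<0$, $a$ is a \emph{positive} multiple of $v_1$, as your own formula $a=|\lambda_1+\lambda^*|\,v_1$ shows.
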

\begin{proof}
The first part of the proof is from \cite{SalahiTaatiw:20}.  Let $\mu_0 \in (\max \lbrace  0,- \lambda_2  \rbrace , -\lambda_1)$. Set $a=-\left( A+ \mu_0 I_n \right) v_1$ where $v_1$ is the eigenvector for  $\lambda_1$  with $||v_1||^2=\delta_1^2$.
For $\mu^* = -2\lambda_1-\mu_0$ and $x^*=-v_1$, the first order stationary condition of (\ref{trsAD}) holds:
\begin{align*}
\left( A + \mu^* I_n \right)  x^*+a &= \left( A + (-2\lambda_1-\mu_0) I_n \right)  (-v_1)- \left( A+ \mu_0 I_n \right) v_1 \\
& =- \left( A + (-2\lambda_1-\mu_0) I_n+A+\mu_0 I_n  \right)  v_1 \\
&=-2 \left( A -\lambda_1 I_n \right)  v_1=0.
\end{align*}
Moreover, since  $\mu^* = -2\lambda_1-\mu_0 > -\lambda_1$ then $A+\mu^* I \succ 0$, which implies
  that $-v_1$   is the global  minimizer  of  (\ref{trsAD}).
\end{proof}
To generate the desirable random instances
of \TTRSp, we proceed as follows. First we construct a \TRS
instance of the form  (\ref{trsAD})  having   LNGM
based on Lemma \ref{lemLNG}. Then we add the inequality constraint
$(x-c)^TB(x-c) \leq \delta_2^2$  to enforce that the global minimizer $-v_1$ of  \TRS be infeasible but  the \LNGM, $v_1$,  remains feasible  (Figure \ref{fig1}) for \TTRSp.
For $20\%$ of the generated instances, the LNGM of the  TRS   (\ref{trsAB})   is also in the feasible region of \TTRSp. Moreover,
strong duality fails at  $90\%$  of the generated instances.

For this class, we compare the {Hybrid} algorithm with Sakaue et. al's algorithm  \cite{SakaueNakat:16} and the  Snopt solver in Tomlab.  Our extensive testing showed that $\tau = 0.9$, $\rho = 4|\lambda_1(A)| + 1$, and $\lambda=2 x_0 $ are appropriate choices where $x_0$ is given by (\ref{intialpoint}).
 Results are summarized in Tables \ref{tabjacas2} to \ref{tabTo2cas2}   for the average of 100 runs.
In Table \ref{tabjacas2}, we compare the {Hybrid} algorithm with the Sakaue et. al's algorithm  \cite{SakaueNakat:16} for  dimension $n\le 30$.
It can be seen that, {Hybrid} algorithm is much faster than Sakaue et. al's algorithm  while having equal objective values (the difference is of $O(10^{-7})$) and comparable KKT accuracy\cite{SakaueNakat:16}. In Tables \ref{tabTo1cas2} and \ref{tabTo2cas2}, we compare {Hybrid} algorithm with the  Snopt solver of Tomlab for different densities. From these two tables,  we can conclude that {Hybrid} algorithm is much better than   Snopt   in large-scale problems.
In Table \ref{tab1cas2}, we have generated examples where the LNGM of TRS (\ref{trsAB}) is always feasible and for about $20\%$ of the generated instances, the LNGMs of \TRS (\ref{trsAD}) are also feasible. Moreover,  for over $95\%$ of instances, the optimal solution is at one of \LNGM, mostly on \LNGM \ of  TRS (\ref{trsAB}).
\begin{figure}[h]
\centering
\includegraphics[width=10cm]{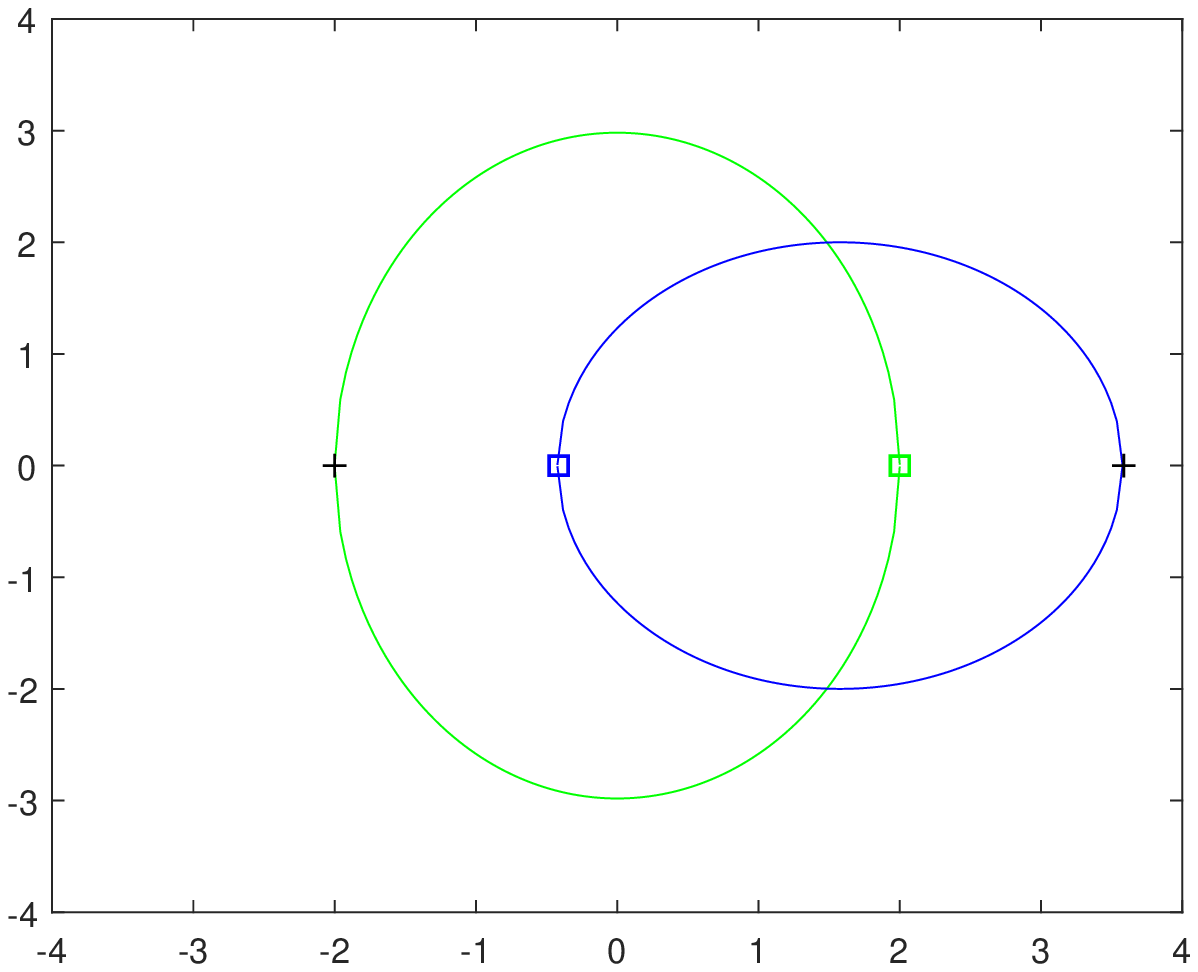}
\caption{\label{fig1}\textsl{ \scriptsize
   $ +$: global solution of (\ref{trsAB})  and   (\ref{trsAD}), $\Box$: LNGMs of (\ref{trsAD}) and (\ref{trsAB}) .
}}
\end{figure}
\begin{table}
\centering
\scriptsize
\small
\centering
\begin{tabular}{ c c c c c c c c c c }
\hline
 &    { Hybrid algorithm\hspace*{-2cm}} &        &       &   &{Sakaue et. al algorithm \cite{SakaueNakat:16}\hspace*{-2.7cm}} & &  \\
\cline{1-4}\cline{6-8}
 n & Obj & CPU & KKT &   & Obj & CPU & KKT \\
\hline
5 & -132.94 & 1.52 & 1.23e-08 &   & -132.94 & 0.04 & 1.47e-10  \\
 10 & -144.27 & 1.85 & 5.15e-08 &   & -144.27 & 0.57 & 1.73e-11  \\
15 & -139.85 & 2.19 & 3.50e-08 &   & -139.85 & 8.75 & 1.74e-10 \\
20 & -114.12 & 2.17 & 6.49e-08 &   & -114.12 & 49.02 & 1.16e-09  \\
 25 & -114.64 & 2.33 & 1.17e-08 &   & -114.64 & 186.34 & 2.20e-09 \\
30 & -134.51 &2.81 & 6.54e-08 &   & -134.51 & 597.72 & 9.57e-09 \\
\hline
\end{tabular}
\caption{ \scriptsize  Comparison with   Sakaue et. al's  algorithm  \cite{SakaueNakat:16} when  $Den= 1$.}
\label{tabjacas2}
\end{table}
\begin{table}
\centering
\small
\begin{tabular}{cccccccccccc}
\hline
n& \scriptsize{CPU({Hybrid algorithm})} & \scriptsize{$||F_{Hy}-F_{Sn}||<10^{-6} $} & \scriptsize{$F_{Hy}<F_{Sn} $} & \scriptsize{Cv-Snopt} & \scriptsize{CPU(Snopt)} \\
\hline
 50 & 2.91 & 83     & 10        & 4              & 4.66  \\
100& 4.22     & 81     &14         & 2           & 4.62          \\
200 & 10.96   &80      &10         &10          & 15.82  \\
300 & 23.62   &86       &12         & 2           & 30.45   \\
500& 62.32     & 82      & 16       & 0           & 70.43  \\
\hline
\end{tabular}
\caption{\scriptsize Comparison with Snopt solver of Tomlab when $Den= 1$.\hspace*{-.6cm}}
\label{tabTo1cas2}
\end{table}\begin{table}
\centering
\small
\begin{tabular}{ccccccccccc}
\hline
 n & \scriptsize{CPU({Hybrid algorithm})} & \scriptsize{$||F_{Hy}-F_{Sn}||<10^{-6} $} & \scriptsize{$F_{Hy}<F_{Sn} $} & \scriptsize{Cv-Snopt} & \scriptsize{CPU(Snopt)} \\
\hline
100   &  1.89     & 71     &26         & 0           & 3.14          \\
200  & 3.35    &86      &12         &1          & 11.51  \\
300 &  3.88   &78       &19         & 3           & 19.63   \\
500& 7.35       & 80      & 19       & 0           & 52.27   \\
700&  18.88    &  76      &     21   &        3     & 107.36   \\
1000 & 30.62 &75        &19       & 3           &  233.92   \\
2000&241.68  & 67       & 22        &10         & 1429.76    \\
\hline
\end{tabular}
\caption{\scriptsize  Comparison with Snopt solver of Tomlab when $Den=0.1$.}
\label{tabTo2cas2}
\end{table}
\begin{table}
\centering
\small
\begin{tabular}{cccccccccccc}
\hline
n    & CPU     & KKT       & L(\ref{trsAD})  & L(\ref{trsAB})    & Opt-2active  & Opt-L (\ref{trsAD}) &  Opt-L (\ref{trsAB})    \\
\hline
100& 5.76    & 3.11e-08 & 21 & 100   & 3       & 10    & 87      \\
 200& 19.87 & 3.32e-08 & 22 & 100   & 1       & 9      & 90    \\
 300&28.91  &  7.13e-08 & 29 & 100  & 4       & 11     & 85       \\
 400& 86.45 & 4.87e-08 & 32  & 100  & 1       & 12     & 87      \\
 500 &136.54& 3.20e-08& 37  & 100  & 3       & 11     & 86       \\
\hline
\end{tabular}
 \caption{\scriptsize Result of  {Hybrid} algorithm for second class of test problem  when $Den=1$.}
\label{tab1cas2}
\end{table}
\item {\bf  ُThird class of test problems}\\
In this class, we generate     \TTRS instances   where    LNGMs
 and global minimizers  of  (\ref{trsAD}) and   (\ref{trsAB}) are all infeasible for \TTRSp.
 This is done in two ways. In the first method, matrix $A$ is generated such that the multiplicity of it's minimum eigenvalue is at least two. Thus  \TRS has no \LNGM.  The second  method is based on Corollary \ref{colng1}.
Moreover, in this class, strong duality holds for  at  least  $90\%$   of the generated instances.
 Starting point for {Hybrid} algorithm uses (\ref{intialpoint}) and
 according to our extensive testing   $\tau = 0.9$, $\rho = 2|\lambda_1(A)| + 1$, and $\lambda=4 x_0 $ are appropriate choices.
  Results are summarized in Tables \ref{case2jasdtable} and \ref{case2tomtable} for the  average of 100 runs.
 In dimensions $5$ to $30$, we compare {Hybrid} algorithm with the    Sakaue et. al's  algorithm  \cite{SakaueNakat:16},
 the corresponding results are reported  in Table \ref{case2jasdtable}.  As we see, our method has   significant advantages over the    Sakaue et. al's  algorithm  \cite{SakaueNakat:16} in term of CPU time while having equal objective values and comparable KKT accuracies.
 In Table \ref{case2tomtable},  we compare the {Hybrid} algorithm with  the Snopt solver in Tomlab. The  Snopt for $n=50~to~300$   is better in term of CPU time but  for larger dimensions {Hybrid} algorithm has better time performance.
 It is worth to note  that the optimal values for both methods are almost the same for most of the problems as shown in the third column of Table \ref{case2tomtable}.
  \begin{table}
\centering
\small
\begin{tabular}{ ccccccccc cc}
\hline
 &{ Hybrid algorithm\hspace*{-2cm}}     &        &       &   &{Sakaue et. al algorithm \cite{SakaueNakat:16}\hspace*{-2.7cm}} & &\\
\cline{1-4}\cline{6-8}
  n       &Obj    &CPU        & KKT            &        &Obj                                                   & CPU   & KKT             \\
\hline
 5       & -38.31& 0.41       & 6.32e-08    &         &-38.31                                              & 0.03    & 1.92e-10   \\
10       &-48.94& 1.06        & 5.81e-08   &         &-48.94                                               & 0.49    & 1.08e-09   \\
15       &-57.29 & 1.11        & 5.15e-08    &        &-57.29                                               & 7.77     & 1.73e-11     \\
 20       &-68.56& 0.98         & 4.72e-08    &      &-68.56                                             & 46.08   & 1.71e-12      \\
25       &-75.38 & 1.06         & 4.43e-08    &        &-75.38                                                & 188.56  & 7.63e-12      \\
30       &-16.42 & 1.67         & 4.21e-10    &        &-16.42                                                & 710.52  & 7.28e-15      \\
\hline
\end{tabular}
\caption{ \scriptsize Comparison with    Sakaue et. al's  algorithm  \cite{SakaueNakat:16} when $Den=1$.}
\label{case2jasdtable}
\end{table}
\begin{table}
\centering
\small
\begin{tabular}{cccccccccccc}
\hline
n&Den & \scriptsize{CPU({Hybrid algorithm})} & \scriptsize{$||F_{Hy}-F_{Sn}||<10^{-6} $} & \scriptsize{$F_{Hy}<F_{Sn} $} & \scriptsize{Cv-Snopt} & \scriptsize{CPU(Snopt)} \\
\hline
50  & 1  & 2.96     &96      & 2           & 2          & 6.31       \\
100 & 1 &6.31       & 97     & 3           & 0          & 5.34     \\
200 & 1 & 22.57    & 88     &10           &2          & 18.33   \\
300 & 1 & 74.92     & 82     &13           & 5        & 55.05    \\
500  &0.1& 6.16    & 98     & 0             & 2          & 45.77  \\
700 & 0.1 & 7.58     & 98     &0           & 2        & 80.77    \\
800 & 0.1 & 9.13     & 95     &2           & 3        & 112.93    \\
900 & 0.1 & 10.84     & 95     &1           & 4        & 134.25    \\
1000 & 0.1 & 11.07     & 95     &0           & 5        & 178.83    \\
\hline
\end{tabular}
\caption{\scriptsize Comparison of  {Hybrid} algorithm with Snopt solver of Tomlab.\hspace*{-1.7cm}}
\label{case2tomtable}
\end{table}
\\
\item {\bf Forth class of test problems}( Homogeneous problem) \\
For this class, we consider $a=0$ and $c=0$ in \TTRSp.
Consequently, strong duality holds and thus  SDP relaxation of \TTRS is   exact  \cite{YYe:15}.
Since the optimal solution of {Hybrid} algorithm satisfies the conditions of Theorem \ref{globalsemi}, it results in the optimal solution of TTRS.
 In this class, we compare the {Hybrid} algorithm  in terms of CPU time and objective value with  CVX software solving SDP relaxation.
 We set $\tau = 0.9$, $\rho = 4|\lambda_1(A)| + 1$ and  $\lambda=4 x_0 $, where $x_0$ is given by (\ref{intialpoint}). The results are summarized in Table  \ref{case3cvxtable}  for the average of 100 runs. As we see, for $n\le 700$  CVX is faster,  while both have the same optimal objective value. However,
for  $n\geq  1000$, {Hybrid} algorithm solves   the problem to global optimality  while CVX can not be applied.
\begin{table}
\centering
\small
\begin{tabular}{ccccccccc}\hline
&      &   \hspace*{-1cm}     {Hybrid algorithm}  \hspace*{-1.7cm}      &          &                &     &\hspace*{-0.5cm}  SDP   \hspace*{-1.8cm}          &    \\
\cline{1-5}\cline{7-8}
n    & Den   & Obj          & CPU        & KKT              &     & Obj              & CPU  \\
\hline
50  & 1        & -37.93  & 5.41        & 4.9e-08        &       &-37.93        & 0.58   \\
100 & 1       & -57.84  & 10.34      & 8.53e-08        &     &-57.84       & 1.16   \\
300 & 1      &-100.70  & 95.93      & 1.08e-08        &      & -100.70     & 15.44  \\
 500 & 0.1   & -42.94 &57.23       & 9.94e-08        &      & -42.94     & 27.16  \\
 700 & 0.1   & -49.75 &116.31     & 9.94e-08        &      & -49.75     & 70.58    \\
1000 & 0.1 & -56.98     & 241.89    & 2.25e-08      &     & -56.98      & 366.64  \\
2000 &0.01 & -28.48     & 162.45    & 5.17e-08      &     & $-$      & $-$  \\
3000 &0.001 & -15.39      & 57.35    &1.28e-08      &     & $-$      & $-$  \\
4000 &0.001 & -19.29     & 140.63    &6.29e-08      &     & $-$      & $-$  \\
5000 &0.001 & -16.21     & 209.17    &1.01e-08      &     & $-$      & $-$  \\
\hline
\end{tabular}
\caption{ \scriptsize  Comparison of  {Hybrid} algorithm with  CVX.}
\label{case3cvxtable}
\end{table}
\end{itemize}

\section{Conclusions}
In this paper, a hybrid algorithm which take advantages of efficient algorithms for finding global and local non-global minimizers of TRS and alternating direction method of multipliers (ADMM) is proposed to tackle the  two-trust-region subproblem.
The convergence of ADMM steps to the first-order stationary condition is proved.
Our numerical experiments on several classes of test problems show that for small-scale problems { hybrid} algorithm has better performance in overall compared to the polynomial-time algorithm of Sakaue et. al's \cite{SakaueNakat:16}.
 Moreover, on medium and large-scale problems comparison with Snopt from Tomlab, as the software giving best results, show that in term of running time,   hybrid algorithm   is better. Also for  large-scale homogeneous problems, hybrid  algorithm outperforms   CVX software.
%

\end{document}